\documentclass[11pt,reqno,a4paper]{amsart}
\usepackage{amsmath,amsthm,verbatim,amscd,amssymb,setspace,hyperref,esint,mathtools}
\usepackage{exscale,color}

\usepackage{mathrsfs}

\usepackage{enumitem}

\usepackage[colorinlistoftodos,prependcaption]{todonotes}

\renewcommand{\epsilon}{\varepsilon}

\newcommand{\R}{\mathbb{R}}

\newcounter{mtheorem}
\newtheorem{mtheorem}[mtheorem]{Theorem}

\newcommand{{\vol}}{\rm vol}

\newcommand{\Ric}{\operatorname{Ric}}

\def\Ric{\operatorname{Ric}}

\def\vol{\operatorname{vol}}

\newtheoremstyle{fancy}{}{}{\itshape}{}{\textbf\bgroup}{.\egroup}{ }{}
\newtheoremstyle{fancy2}{}{}{\rm}{}{\textbf\bgroup}{.\egroup}{ }{}

\theoremstyle{fancy}
\newtheorem{theorem}{Theorem}[section]
\newtheorem{lemma}[theorem]{Lemma}
\newtheorem{corollary}[theorem]{Corollary}

\newtheorem{prop}[theorem]{Proposition}

\theoremstyle{fancy2}
\newtheorem{definition}[theorem]{Definition}

\newtheorem{remark}[theorem]{Remark}

\newtheorem{claim}[theorem]{Claim}

\setlist{leftmargin=*}

\numberwithin{equation}{section}
\begin{document}
\title{Uniqueness of the soliton vector field of a shrinking gradient K\"ahler-Ricci soliton}
\date{\today}

\author{Ronan J.~Conlon}
\address{Department of Mathematical Sciences, The University of Texas at Dallas, Richardson, TX 75080}
\email{ronan.conlon@utdallas.edu}
\author{Alix Deruelle}
\address{Universit\'e Paris-Saclay, CNRS, Laboratoire de math\'ematiques d'Orsay, 91405, Orsay, France}
\email{alix.deruelle@universite-paris-saclay.fr}

\begin{abstract}
We establish the uniqueness of the soliton vector field of a complete shrinking gradient K\"ahler-Ricci soliton up to pushforward by biholomorphism.
\end{abstract}

\maketitle

\markboth{Ronan J.~Conlon and Alix Deruelle}{Uniqueness of the soliton vector field of a shrinking gradient K\"ahler-Ricci soliton}

\section{Introduction}

\subsection{Overview}
A \emph{Ricci soliton} is a triple $(M,\,g,\,X)$, where $M$ is a Riemannian manifold endowed with a complete Riemannian metric $g$
and a complete vector field $X$, such that
\begin{equation}\label{hot}
\Ric(g)+\frac{1}{2}\mathcal{L}_{X}g=\lambda g
\end{equation}
for some $\lambda\in\mathbb{R}$. The vector field $X$ is called the
\emph{soliton vector field}. If $X=\nabla^{g} f$ for some smooth real-valued function $f$ on $M$,
then we say that $(M,\,g,\,X)$ is \emph{gradient}. In this case, the soliton equation \eqref{hot}
becomes $$\Ric(g)+\operatorname{Hess}_{g}(f)=\lambda g,$$
and we call $f$ the \emph{soliton potential}. In the case of gradient Ricci solitons, the completeness of $X$ is guaranteed by the completeness of $g$
\cite{zhang12}.

Let $(M,\,g,\,X)$ be a Ricci soliton. If $g$ is K\"ahler with K\"ahler form $\omega$ and $X$ is real holomorphic, then we say that $(M,\,g,\,X)$ 
(or $(M,\,\omega,\,X)$) is a \emph{K\"ahler-Ricci soliton}. If $(M,\,g,\,X)$ is in addition gradient, then \eqref{hot} may be rewritten as
\begin{equation}\label{krseqn}
\rho_{\omega}+i\partial\bar{\partial}f=\lambda\omega,
\end{equation}
where $\rho_{\omega}$ is the Ricci form of $\omega$ and $f$ is the soliton potential.

Finally, a Ricci soliton and a K\"ahler-Ricci soliton are called \emph{steady} if $\lambda=0$, \emph{expanding}
if $\lambda<0$, and \emph{shrinking} if $\lambda>0$ in \eqref{hot}.
One can always normalise $\lambda$, when non-zero, to satisfy $|\lambda|=1$. We henceforth assume that this is the case.

Ricci solitons are interesting both from the point of view of canonical metrics and of the Ricci flow. On one hand, they represent one direction in which the notion of an Einstein manifold can be generalised. On compact manifolds, shrinking Ricci solitons are known to exist in several instances where there are obstructions to the existence of Einstein metrics; see, for example, \cite{soliton}. By the maximum principle, there are no non-trivial expanding or steady Ricci solitons on compact manifolds. However, there are many examples on non-compact manifolds; see, for example, \cite{Cao-KR-sol, FIK, Wang, con-der, conlon33, schafer2, schaffer, heather}. On the other hand, one can associate to a Ricci soliton a self-similar solution of the Ricci flow, and gradient shrinking Ricci solitons in particular provide models for finite-time Type I singularities of the flow \cite{naber, topping}. From this perspective, it is important to 
study such solitons in order to better understand singularity formation in the Ricci flow.

In this article, we prove a uniqueness result for the soliton vector field of a shrinking gradient K\"ahler-Ricci soliton. 

\subsection{Main results}
Our main result is the following, which provides an answer to \cite[Question (7), Section 7.2]{cds}.
\begin{mtheorem}[Uniqueness of the soliton vector field for shrinkers]\label{uunique}
Let $M$ be a non-compact complex manifold. Then up to pushforward by biholomorphism, there
exists at most one real holomorphic vector field $X$ on $M$ that admits a complete shrinking gradient K\"ahler-Ricci soliton $(M,\,g,\,X)$ with $X=\nabla^g f$ for a smooth real-valued function $f$ on $M$.
\end{mtheorem}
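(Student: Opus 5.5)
Given two complete shrinking gradient Kähler-Ricci solitons $(M,g_1,X_1)$ and $(M,g_2,X_2)$, I will produce a biholomorphism $\phi$ of $M$ with $\phi_*X_2=X_1$. The strategy is the non-compact version of the Tian--Zhu argument for Fano manifolds. The soliton vector field should be characterised as the unique critical point of a strictly convex functional (a weighted volume, or the Tian--Zhu modified Futaki invariant) on the Lie algebra of a maximal torus. A conjugacy argument in the automorphism group then moves both fields into a common torus.

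\emph{Step 1 (structure).} I will use the standard facts for shrinkers. $f$ grows like $\frac14 d^2$, the scalar curvature is bounded below, and $e^{-f}\omega^n$ has finite volume, so the weighted integrals below converge. Since $JX$ is Killing for $g$ and holomorphic, the closure of its flow in $\operatorname{Isom}(M,g)\cap\operatorname{Aut}(M)$ is a compact torus $T$. The gradient flow of $f$ from the compact critical set shows that the minimum set of $f$ is compact and that $f$ is proper.

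\emph{Step 2 (common torus).} Let $T_i$ be the torus generated by $JX_i$. Both $T_1$ and $T_2$ lie in the reduced automorphism group of holomorphic transformations preserving a suitable class of potentials. I want both inside one maximal compact subgroup $K$ of a Lie group $G$, and this is where non-compactness bites, since $\operatorname{Aut}(M)$ may be infinite dimensional. I will replace $G$ by the automorphisms commuting with $T_1$ whose generating vector fields have at most quadratic growth of potential relative to $g_1$. The aim is to show this $G$ is finite dimensional and that $T_2$ sits in a compact subgroup of it. Iwasawa/Cartan conjugacy of compact subgroups into a maximal compact, and of maximal tori within it, then gives $\phi$ with $\phi_*T_2$ and $T_1$ in a common torus $T$ with Lie algebra $\mathfrak t$.

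\emph{Step 3 (convex functional).} For $\xi\in\mathfrak t$ with $J\xi$ in the appropriate cone, I will define
\[
F(\xi)=\int_M e^{-\theta_\xi}\,\omega^n,
\]
where $\theta_\xi$ is the Hamiltonian potential of $\xi$ with respect to a fixed $T$-invariant Kähler form $\omega$, normalised as $\Delta\theta_\xi+\theta_\xi-\frac12\xi(\log\omega^n)=0$ (the analogue of the Tian--Zhu normalisation). I will show that $F$ is independent of the choice of $\omega$ in its class, using the Duistermaat--Heckman-type invariance proved by integrating along a path of $T$-invariant forms, with boundary terms killed by the exponential weight. Strict convexity follows from $\frac{d^2}{dt^2}F=\int(\theta_\eta)^2e^{-\theta}\omega^n>0$. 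The soliton equation shows that $J X_i$, with $\omega=\omega_i$ and $\theta=f_i$, is a critical point: the derivative is $-\int\theta_\eta e^{-f}\omega^n$, which vanishes by integration by parts. A strictly convex function has at most one critical point, so $X_1=\phi_*X_2$.

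\emph{Main obstacle.} The hardest part is Step 2, together with the choice of Kähler class in Step 3. Both solitons must be compared on a common $T$-invariant form, yet $\omega_1$ and $\omega_2$ may lie in different classes and $M$ is non-compact. My first attempt is to avoid comparing classes. I would express $F$ via the weighted volume of the affine (Hamiltonian) structure, noting that $e^{-f}\omega^n$ is determined by holomorphic data. Concretely, $e^{-f}\omega^n=|\Omega|^2$-type for the holomorphic volume form normalised by $X$, using $\rho+i\partial\bar\partial f=\omega$. This makes $F$ depend only on the complex structure and $\xi$. For Step 2, I would first try a Matsushima-type theorem on the shrinker: holomorphic fields with $L^2(e^{-f})$ potentials form a reductive Lie algebra whose compact part is the Killing fields. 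Then I would invoke conjugacy of maximal compact subgroups in the associated connected Lie group.
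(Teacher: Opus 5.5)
\textbf{The gap is in Step 2.} Your Step 3 is essentially the paper's Lemma \ref{critical}, but Step 2 does not work, and this is exactly where the non-compact case departs from Tian--Zhu.

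Your group $G$ is built entirely from the first soliton. A real holomorphic field commuting with $JX_1$ also commutes with $X_1$, since $[Y,JZ]=J[Y,Z]$. So $G$ sits inside $\operatorname{Aut}_0^{X_1}(M)$, and nothing puts $T_2$ in it: having $JX_2$ commute with $X_1$ is, up to conjugation, what you are trying to prove. Likewise, ``quadratic growth of potentials relative to $g_1$'' says nothing about $g_2$-Killing fields, because $g_1$ and $g_2$ are not comparable at infinity.

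Your fallback fails too. A Matsushima theorem for \emph{all} holomorphic fields with $L^2(e^{-f}\omega^n)$ potentials is impossible, because that algebra is infinite-dimensional. On $\mathbb{C}^n$ with the Gaussian shrinker, every polynomial field $\sum_j a_j(z)\partial_{z_j}$ is $\nabla^{1,0}$ of a constant multiple of $\sum_j\bar z_j a_j(z)$, which is square-integrable against the Gaussian weight. Matsushima holds only on $\mathfrak{aut}^{X}(M)$ (Theorem \ref{matty}). There it gives maximal compactness of $G_0^{X_i}$ in $\operatorname{Aut}_0^{X_i}(M)$, and these are two different groups. So there is no finite-dimensional Lie group containing both tori in which to apply Cartan/Iwasawa conjugacy.

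The missing idea is algebraic, and the paper supplies it in four steps:
\begin{enumerate}
\item By Song's theorem, each soliton makes $M$ a polarised Fano fibration $\pi_i:M\to Y_i$ over an affine cone, with Reeb field $JX_i$.
\item Enlarge $T_i$ to a maximal torus $T_i'$ of the holomorphic isometry group of $g_i$. Commutativity shows that $T_i'$ still has the unique fixed point $x_i$ on $Y_i$.
\item Corollary \ref{maxx}, together with the fact that any torus containing $T_i'$ commutes with $X_i$, makes $T_i'$ a maximal torus of all of $\operatorname{Aut}(M)$.
\item Esparza's Iwasawa-type theorem says that maximal tori of $\operatorname{Aut}(M)$ containing a Reeb field of a polarised Fano fibration are conjugate. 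This gives a biholomorphism placing $JX_1$ and $JX_2$ in one torus that acts isometrically for both metrics.
\end{enumerate}
Matsushima is used only in the third step, to get maximality, not to conjugate.

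\textbf{Two smaller points.}

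First, compactness of the zero set of $X$ does not follow from the gradient flow. With the normalisation \eqref{pressure}, $f=\frac{R_g}{2}-n$ at critical points, so without a curvature bound critical points could a priori escape to infinity. Compactness is Song's theorem. It is the input behind Proposition \ref{keywest}, the finite-dimensionality of $\mathfrak{aut}^X(M)$, Theorem \ref{matty}, and the finiteness of $F$.

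Second, $e^{-f}\omega^n$ is not $|\Omega|^2$ for a holomorphic volume form. Locally, $-i\partial\bar\partial\log\bigl(e^{-f}\det(g_{j\bar k})\bigr)=\rho_\omega+i\partial\bar\partial f=\omega\neq0$. So this idea does not make $F$ metric-independent. Both $\omega_i$ represent $2\pi c_1(M)$, so the Kähler classes are not the problem; control at infinity for a path argument is. The paper defers metric-independence to the Conlon--Deruelle--Sun argument, which rests on Duistermaat--Heckman and hence again on the compact zero set. Note that the normalisation $\Delta_\omega u_Y+u_Y+\frac12(JY)\cdot f=0$ forces $u_Y$ at a fixed point to equal $\frac12\operatorname{tr}(J\circ dY)$, which depends only on the holomorphic action.

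Finally, the paper does not differentiate $F$ and integrate by parts, which would require $\theta_\eta\in L^1(e^{-f}\omega^n)$ for arbitrary $\eta\in\mathfrak t$. Instead it proves $F(Y)\geq F(JX)$ directly, using $e^x\geq 1+x$ and the cutoff $\psi(u_Y/R)$, and this exploits the properness of $u_Y$ for $Y\in\Lambda$.
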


This result was proved in the compact case by Tian-Zhu \cite{tianzhu2}. In the non-compact case, uniqueness was proved
under the assumption of bounded Ricci curvature for soliton vector fields lying in the 
Lie algebra of a given real torus \cite[Theorem D]{cds}. The torus assumption was subsequently removed in \cite[Theorem 1.3]{Esp25}.
For toric shrinking gradient K\"ahler-Ricci solitons,
uniqueness was proved without a curvature bound in \cite[Theorem 4.6]{charlie}.

The proof of Theorem \ref{uunique} follows closely the argument for the corresponding result in the bounded Ricci curvature case; cf.~\cite[Theorem D]{cds}. 
In essence, we demonstrate that the Ricci curvature bound can be removed because the zero set of the soliton vector field $X$ is now known to be compact \cite[Proposition 3.5]{JunshengSong}. More precisely, there are three essential inputs for the proof of Theorem \ref{uunique}.
\begin{itemize}
  \item  The first is Theorem \ref{matty}, a Matsushima-type theorem for complete shrinking gradient K\"ahler-Ricci solitons. The fact that the zero set of the soliton vector field is compact yields the weighted $L^{2}$-bound of Proposition \ref{keywest} for real vector fields commuting with the soliton vector field. With this, the proof of \cite[Theorem 5.1]{cds} carries over to give the desired Matsushima theorem. 
  \item  The second main ingredient is the characterisation of $JX$ as the minimiser of the weighted volume functional in Lemma \ref{critical}, the new idea being that we exploit the properness of the potentials themselves.
  \item The third is Iwasawa’s theorem for maximal tori in the automorphism
group of a polarised Fano fibration that contain a Reeb vector field \cite[Proposition 5.17]{Esp25}.
  \end{itemize}
With these three ingredients in hand, the proof of \cite[Theorem D]{cds} carries over almost verbatim to give Theorem \ref{uunique}.

\subsection{Acknowledgements}
The first author is supported by a Simons Travel Grant. The second author is partially supported by grants from the
French National Research Agency ANR-24-CE40-0702 (Project OrbiScaR) and the Fondation Charles Defforey-Institut de France via the project
``KRIS''. He also benefits from a Junior Chair from the Institut Universitaire de France.

\subsection{Declaration on the Use of AI} 
The authors used ChatGPT 6-Astra to assist with literature review, checking mathematical arguments, and improving the language and presentation. The authors retain exclusive responsibility for the correctness, accuracy, and validity of the results presented in this work.

\subsection{Notation}
For clarity, we provide a summary of our notation. 
\begin{itemize}
 \item $(M,\,g,\,X)$ is a shrinking gradient K\"ahler-Ricci soliton on a complex manifold $M$ with K\"ahler metric $g$ and soliton vector field $X$.
\item $n$ is the complex dimension of $M$.
\item $\omega$ is the K\"ahler form of $g$.
 \item $J$ is the complex structure of $M$.
 \item $X=\nabla^{g}f$ for a smooth real-valued function $f:M\to\mathbb{R}$, the soliton potential.
 \item $L^{2}(e^{-f}\omega^{n})=\{u\in C^{\infty}(M)\,|\,\int_{M}|u|^{2}\,e^{-f}\omega^{n}<\infty\}$.
\item $\mathfrak{g}^{X}$ is the Lie algebra of real holomorphic Killing vector fields that commute with $X$. 
 \item $G_{0}^{X}$ is the connected component of the identity of the Lie group of holomorphic isometries
of $(M,\,J, \,g)$ that commute with the flow of $X$ and has Lie algebra $\mathfrak{g}^{X}$; cf.~Lemma \ref{class}.
\item $\mathfrak{aut}^{X}(M)$ is the Lie algebra of real holomorphic vector fields that commute with $X$.
\item $\operatorname{Aut}_{0}^{X}(M)$ is the connected component of the identity of the Lie group of biholomorphisms of $M$ that commute with the flow of $X$ 
and has Lie algebra $\mathfrak{aut}^{X}(M)$; cf.~Proposition \ref{eastbeach}.
\item $\operatorname{Aut}(M)$ is the automorphism group of $M$.
\end{itemize}

\section{Preliminaries}\label{sec_preliminaries}

In this section, we consider some properties of shrinking gradient Ricci and K\"ahler-Ricci solitons. 
\subsection{Properties of shrinking Ricci solitons}

We have the following:
\begin{theorem}\label{prop-sum-sgrs}
Let $(M^{m},\,g,\,X)$ be a complete non-compact shrinking gradient Ricci soliton of real dimension $m$ 
satisfying $\Ric(g)+\frac{1}{2}\mathcal{L}_{X}g=g$ with scalar curvature $R_{g}$ and soliton vector field $X=\nabla^{g}f$ for a smooth real-valued function $f:M\to\mathbb{R}$.
Then the following hold true.
\begin{enumerate}
\item \textnormal{(Non-negativity of the scalar curvature {\cite[Theorem 1.3(ii)]{zhang12}}).}~$R_{g}\geq0$. \\

\item \textnormal{(Growth of the soliton potential {\cite[Theorem 1.1]{caoo}}).}~For $x\in M$, $f$ satisfies the estimates
$$\frac{1}{2}(d_{g}(p,\,x)-c_{1})^{2}-C\leq f(x)\leq\frac{1}{2}(d_{g}(p,\,x)+c_{2})^{2}$$
for some $C>0$, where $d_{g}(p,\,\cdot)$ denotes the distance to a fixed point $p\in M$ with respect to $g$.
Here, $c_{1}$ and $c_{2}$ are positive constants depending only on the real dimension of $M$ and
the geometry of $g$ on the unit ball $B_{p}(1)$ based at $p$. \\
\item \textnormal{(Polynomial volume growth \cite[Theorem 1.2]{caoo}).}~For each $x\in M$, there exists a positive constant $C>0$
such that $$\operatorname{vol}_{g}(B_{r}(x))\leq Cr^{m}\quad\textrm{for $r > 0$ sufficiently large}.$$\\
\end{enumerate}
In particular, $\int_{M} e^{-f}d\mu<\infty$, where $d\mu$ is the volume form of $g$.
\end{theorem}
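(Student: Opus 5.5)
The three numbered items are quoted results (Zhang; Cao--Zhou), so I will take them as given. Item (1) is \cite[Theorem 1.3(ii)]{zhang12}, and items (2) and (3) are \cite[Theorems 1.1, 1.2]{caoo}. What needs an argument is the final assertion $\int_M e^{-f}\,d\mu<\infty$. I plan to derive it by combining the quadratic lower bound on $f$ from (2) with the polynomial volume growth from (3), integrating over dyadic annuli.

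\textbf{Step 1 (lower bound on $f$ on annuli).} Fix $p\in M$ and set $r(x)=d_g(p,x)$. By (2), $f(x)\geq \frac12(r(x)-c_1)^2-C$. Hence for $r(x)\geq 2c_1$ we have $r-c_1\geq r/2$, so $f(x)\geq r(x)^2/8-C$. On the compact ball $B_p(R_0)$ with $R_0=\max\{2c_1,1\}$, the integrand $e^{-f}$ is continuous and the volume is finite, so that piece contributes a finite amount.

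\textbf{Step 2 (summing over annuli).} Let $A_k=B_p(2^{k+1}R_0)\setminus B_p(2^kR_0)$ for $k\geq 0$. On $A_k$, Step 1 gives $e^{-f}\leq e^{C}e^{-4^kR_0^2/8}$. By (3) applied at $x=p$, there is $C'>0$ with $\operatorname{vol}_g(A_k)\leq \operatorname{vol}_g(B_p(2^{k+1}R_0))\leq C'(2^{k+1}R_0)^m$ for all $k$ large. For the finitely many small $k$, the annuli are relatively compact, so their volumes are finite. Therefore
$$\int_M e^{-f}d\mu\leq \int_{B_p(R_0)}e^{-f}d\mu+\sum_{k\geq0}e^{C}e^{-4^kR_0^2/8}\operatorname{vol}_g(A_k)<\infty,$$
since the terms of the series decay like $2^{km}e^{-c4^k}$. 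An alternative is the coarea or layer-cake form, $\int_M e^{-f}\leq C\int_0^\infty e^{-r^2/8}\,d\operatorname{vol}(B_p(r))$, followed by integration by parts; this gives the same conclusion.

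The main point needing care is that (3) holds only for $r$ sufficiently large and with a constant depending on the base point. This is harmless: I fix the base point $p$, use (3) there with one constant $C'$, and absorb the finitely many remaining annuli into the compact piece. Completeness of $g$ ensures that closed balls are compact (Hopf--Rinow), which justifies the finiteness of the local contributions. No further obstacle arises.
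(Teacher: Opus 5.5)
Your proposal is correct and follows the same route as the paper. The paper simply quotes items (1)--(3) from the literature and leaves the final integrability as an implicit consequence (``In particular'') of the quadratic lower bound on $f$ in (2) combined with the polynomial volume growth in (3); your dyadic-annuli argument spells out that step correctly.
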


The next lemma collects together some well-known identities for shrinking gradient K\"ahler-Ricci solitons that we will exploit. Their proofs 
can be found for instance in \cite[Chapter 4]{Cho-Lu-Ni-Boo}. 
\begin{lemma}[Ricci soliton identities]\label{solitonid}
Let $(M^{n},\,g,\,X)$ be a shrinking gradient K\"ahler-Ricci soliton of complex dimension $n$ satisfying \eqref{krseqn} with $\lambda=1$
with soliton vector field $X=\nabla^{g}f$ for a smooth real-valued function $f:M\to\mathbb{R}$. Then the trace and first order soliton identities are:
\begin{equation*}
\begin{split}
&\Delta_{\omega} f +\frac{R_{g}}{2}=n,\\
&\nabla^g R_{g}-2 \Ric(g)(X)=0,\\
&|\nabla^g f|^2+R_{g}-2f=\operatorname{const.},\\
\end{split}
\end{equation*}
where $R_{g}$ denotes the scalar curvature of $g$ and $|\nabla^g f|^2:=g^{ij}\partial_if\partial_{j}f$.
\end{lemma}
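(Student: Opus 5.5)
The identities follow from tracing and differentiating the soliton equation, so I will prove them in order, each using the previous one.

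\textbf{Trace identity.} Write \eqref{krseqn} with $\lambda=1$ in real form as $\Ric(g)+\operatorname{Hess}_g(f)=g$; this is the Riemannian equation \eqref{hot} with $X=\nabla^g f$. Taking the trace with respect to $g$ over the real dimension $2n$ gives $R_g+\Delta_g f=2n$, where $\Delta_g$ is the Riemannian Laplacian. On a K\"ahler manifold the complex Laplacian $\Delta_\omega=g^{i\bar j}\partial_i\partial_{\bar j}$ equals $\tfrac12\Delta_g$. Dividing by $2$ therefore gives $\Delta_\omega f+\tfrac{R_g}{2}=n$. Equivalently, one can wedge \eqref{krseqn} with $\omega^{n-1}$; then the normalisation of $R_g$ against $\rho_\omega$ produces the factor $\tfrac12$.

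\textbf{First-order identity.} Take the divergence of $\Ric+\nabla^2 f=g$. The contracted Bianchi identity gives $\operatorname{div}\Ric=\tfrac12\nabla R_g$. For the Hessian term, commuting derivatives gives
\begin{equation*}
\operatorname{div}\nabla^2 f=\nabla\Delta_g f+\Ric(\nabla f).
\end{equation*}
Since $\operatorname{div} g=0$, and substituting $\Delta_g f=2n-R_g$ from the trace identity, we get
\begin{equation*}
\tfrac12\nabla R_g-\nabla R_g+\Ric(\nabla f)=0,
\end{equation*}
that is, $\nabla R_g=2\Ric(X)$.

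\textbf{Hamilton's identity.} Differentiate: $\nabla|\nabla f|^2=2\nabla^2 f(\nabla f)=2(\nabla f-\Ric(\nabla f))$, using the soliton equation. Then
\begin{equation*}
\nabla\bigl(|\nabla f|^2+R_g-2f\bigr)=2\nabla f-2\Ric(\nabla f)+2\Ric(\nabla f)-2\nabla f=0.
\end{equation*}
On the connected manifold $M$ the quantity $|\nabla^g f|^2+R_g-2f$ is therefore constant.

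\textbf{Main obstacle.} The only real work is bookkeeping of conventions. First, $\Delta_\omega$ must be the complex Laplacian, which is half the Riemannian one, since this accounts for the $\tfrac{R_g}{2}$. Second, $|\nabla^g f|^2=g^{ij}\partial_if\partial_jf$ is the real norm. Third, the Ricci commutation formula must be applied with the correct sign. I would fix these conventions as in \cite[Chapter 4]{Cho-Lu-Ni-Boo} before computing.
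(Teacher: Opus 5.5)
Your proposal is correct and follows the standard derivation that the paper itself does not write out but defers to \cite[Chapter 4]{Cho-Lu-Ni-Boo}: you trace $\Ric(g)+\operatorname{Hess}_g(f)=g$, take its divergence using the contracted Bianchi identity and $\operatorname{div}\nabla^2 f=\nabla\Delta_g f+\Ric(\nabla f)$, and then differentiate $|\nabla f|^2$. Your conventions, $\Delta_\omega=\tfrac12\Delta_g$ with $R_g$ the Riemannian scalar curvature, are the right ones and match the paper's later use of these identities in \eqref{normal-f} and in the computation $\Delta_g f-X\cdot f=-2f$.
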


\begin{remark}\label{rk-sol-id}
We henceforth normalise the soliton potential $f$ of a shrinking gradient K\"ahler-Ricci soliton of complex dimension $n$ satisfying \eqref{krseqn} with $\lambda=1$ so that 
\begin{equation}\label{pressure}
|\nabla^g f|^2+R_{g}-2f=2n. 
\end{equation}
The choice of constant $2n$ is dictated by the following equation satisfied by $f$:
\begin{equation}\label{normal-f}
\Delta_{\omega}f-\frac{X}{2}\cdot f=-f.
\end{equation}
This choice of constant also implies that $f+n$ is non-negative on $M$ since the scalar curvature $R_g$ of $g$ is necessarily non-negative.
\end{remark}

\subsection{Properties of real holomorphic vector fields commuting with the soliton vector field}
In this section, we mention some properties of real vector fields that commute with the soliton vector field
on a complete shrinking gradient K\"ahler-Ricci soliton. Many of these properties were proven in \cite{cds} under the assumption of bounded Ricci curvature. Here we show that, 
thanks to the fact that the zero set of the soliton vector field is now known to be compact \cite{JunshengSong}, the Ricci curvature bound is unnecessary.

We begin with the weighted integrability for real vector fields commuting with the soliton vector field.
\begin{prop}[\protect{\textnormal{cf.~\cite[Proposition 2.30]{cds}}}]\label{keywest}
Let $(M,\,g,\,X)$ be a complete shrinking gradient K\"ahler-Ricci
soliton with K\"ahler form $\omega$ and soliton vector field
$X=\nabla^{g}f$ for a smooth real-valued function
$f:M\to\mathbb{R}$.
Then every smooth real vector field $Y$ on $M$ with
$[X,\,Y]=0$ lies in $L^2(e^{-f}\omega^n)$.
\end{prop}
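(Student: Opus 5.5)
Since $f$ grows quadratically by Theorem \ref{prop-sum-sgrs}(2) and the volume growth is polynomial by Theorem \ref{prop-sum-sgrs}(3), it suffices to show that $|Y|_g$ grows at most polynomially (in fact at most exponentially with rate $o(d^2)$) in the distance $d_g(p,\cdot)$. The plan is to transport $Y$ along the flow of $-X/|X|^2$ (or of $-X$) from a compact neighbourhood of the zero set of $X$ out to infinity, using $[X,Y]=0$. This yields an ODE for $|Y|^2_g$ along flow lines of $X$. The compactness of the zero set of $X$ \cite[Proposition 3.5]{JunshengSong} is what makes this work without a curvature bound.

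\textbf{Step 1 (reduction to a compact set).} Since $\{X=0\}$ is compact and $f$ is proper and bounded below, choose $R_0$ large so that $K:=\{f\le R_0\}$ is compact and contains $\{X=0\}$ in its interior. On $M\setminus K$ we have $X\neq0$. By \eqref{pressure} and $R_g\ge0$, $|X|^2\le 2f+2n$. We also need a lower bound $|X|^2\ge c f$ for $f$ large. For this, use the growth estimates of $f$ together with the fact that $R_g\le C$ along most of $M$ is not available. So instead we argue directly: on $\{f\ge R_0\}$, the function $f$ increases along the flow of $X$ at rate $X\cdot f=|X|^2>0$. Every point $x\notin K$ lies on a flow line of $X$ emanating from $\partial K$, reached in time $t(x)$ with $f(x)\le e^{t}$-type control. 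Indeed \eqref{normal-f} gives no direct bound, but $X\cdot f=|X|^2\le 2f+2n$ gives $f(\phi_t(y))+n\le e^{2t}(f(y)+n)$, so $t(x)\ge \frac12\log\frac{f(x)+n}{R_0+n}$. This is a lower bound on $t$; we actually need an upper bound on the growth of $Y$ in terms of $f$.

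\textbf{Step 2 (evolution of $|Y|^2$).} Since $[X,Y]=0$, we have $(\phi_t)_*Y=Y$. Hence
$$X\cdot|Y|^2=(\mathcal L_Xg)(Y,Y)=2\operatorname{Hess}f(Y,Y)=2|Y|^2-2\Ric(Y,Y).$$
The key issue is the sign of $\Ric$. Rather than bounding the Ricci curvature, reparametrise by $f$. Along $\gamma$ with $\dot\gamma=X/|X|^2$ we get $\frac{d}{ds}f=1$ and
$$\frac{d}{ds}|Y|^2=\frac{2|Y|^2-2\Ric(Y,Y)}{|X|^2}.$$
This is where the main obstacle lies: we need $\Ric(Y,Y)\ge -C|Y|^2$ or a substitute. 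My first attempt uses the Kähler structure. Replace $Y$ by its $J$-rotations; if $Y$ is real holomorphic (the case of interest), then $\mathcal L_{JY}$ and $\nabla Y$ give $\Ric(Y)=-\Delta Y$-type Bochner identities, and integrate by parts with weight $e^{-f}$ on $\{f\le s\}$ instead of pointwise. Concretely, set $F(s)=\int_{\{f\le s\}}|Y|^2e^{-f}\omega^n$, and use the coarea formula together with $\operatorname{div}(e^{-f}X)=(\Delta f-|X|^2)e^{-f}$ to relate $\frac{d}{ds}\int_{\{f=s\}}|Y|^2e^{-f}/|X|$ to boundary terms. The trace identity $\Delta f=n-R/2\le n$ controls the divergence, and the non-negative term $\Ric(Y,Y)$ can be dropped once the ODE is written for the weighted level-set integral with $R_g\ge0$ used via the trace.

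\textbf{Step 3 (conclusion).} The aim is an inequality of the form
$$\frac{d}{ds}\int_{\{f=s\}}\frac{|Y|^2}{|X|}\le \Big(1+\frac{C}{s}\Big)\int_{\{f=s\}}\frac{|Y|^2}{|X|}$$
for $s\ge R_0$, which gives growth at most $Ce^{s}s^{C}$. That would sit exactly at the borderline against $e^{-f}=e^{-s}$, so the sharp constant matters. I would therefore use the $\lambda=1$ normalisation carefully: the factor $2|Y|^2$ should be matched against $|X|^2\approx 2s$, giving a rate $1/s$ rather than $1$. This produces polynomial growth $s^{C}$, and hence $\int|Y|^2e^{-f}<\infty$ by the polynomial volume growth. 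The lower bound $|X|^2\gtrsim f$ far out is needed here. I would get it from \eqref{pressure} together with the average bound $\int_{\{f\le s\}}R_g\le Cs^{n}$ (from integrating the trace identity), which is enough on average in the integrated version. Controlling $\Ric(Y,Y)$ without a curvature bound is the main difficulty, and I expect to handle it via this integrated/weighted formulation rather than pointwise.
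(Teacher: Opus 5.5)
There is a genuine gap, and it is the one you flag yourself: nothing in your argument controls the term $\Ric(Y,Y)$ in $X\cdot|Y|_g^2=2|Y|_g^2-2\Ric(Y,Y)$. The remedy you sketch does not supply that control.

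\begin{itemize}
\item $\Ric(Y,Y)$ is not non-negative on a general shrinker. $R_g\geq0$ only controls the trace of $\Ric$, not its value on the particular field $Y$.
\item Passing to level-set integrals via the coarea formula does not remove the term. The $s$-derivative of $\int_{\{f=s\}}|Y|_g^2\,|X|^{-1}$ still contains $-2\int_{\{f=s\}}\Ric(Y,Y)\,|X|^{-3}$, which has no sign.
\item An averaged bound on $\int_{\{f\leq s\}}R_g$ cannot give the pointwise lower bound $|X|^2\gtrsim f$ that an ODE along a single flow line needs.
\item The rate in Step 3 is never actually derived. You first aim for $e^{s}s^{C}$, which fails against $e^{-f}$, and then assert $s^{C}$.
\end{itemize}

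More fundamentally, $[X,Y]=0$ gives $|Y|_g(\Phi_t(x))=|d\Phi_t(Y_x)|_g$. So a pointwise growth bound on $Y$ is really a bound on the stretching of $d\Phi_t$, which is governed by $\operatorname{Hess}f=g-\Ric$. That is a curvature statement you do not have. Specialising to real holomorphic $Y$ would weaken the statement, which concerns arbitrary smooth $Y$, without obviously helping. In the paper, the pointwise growth bound for holomorphic $Y$ (Lemma~\ref{sharp-growth-clim-Y}) is \emph{deduced from} this proposition, not the other way round.

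The missing idea is to abandon pointwise bounds and exploit that the flow $\Phi_t$ of $X$ is holomorphic.

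\begin{itemize}
\item Since $\mathcal L_X\omega=2i\partial\bar\partial f$, one has $\Phi_t^*\omega=\omega+2i\partial\bar\partial F_t$ with $F_t=\int_0^t f\circ\Phi_s\,ds$.
\item Because $\Phi_t^*g$ is $J$-Hermitian, $|Y|_g^2\circ\Phi_t=(\Phi_t^*g)(Y,Y)\leq|Y|_g^2\operatorname{tr}_\omega(\Phi_t^*\omega)$.
\item The trace is not bounded pointwise, but it is bounded in $L^1$ on a compact set. Take $K=\{\Phi_s(x): f(x)=3A,\ 0\leq s\leq 1\}$ with $A>\max\{1,\sup_{\{X=0\}}f\}$, and a fixed cutoff $\chi\geq0$ equal to $1$ near $K$. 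Then $\int_K\operatorname{tr}_\omega(\Phi_t^*\omega)\,\omega^n\leq n\int_M\chi\,\omega^n+2\int_M F_t\,\Delta_\omega\chi\,\omega^n\leq Ce^{2t}$.
\item Because $i\partial\bar\partial$ does not depend on the metric, all derivatives fall on $\chi$ and no curvature enters. One only needs $\sup_K|F_t|\leq Ce^{2t}$, which follows from the inequality $f\circ\Phi_t+n\leq e^{2t}(f+n)$ you already derived in Step 1.
\item The weighted measure satisfies $\Phi_t^*(e^{-f}\omega^n)=e^{-2F_t}e^{-f}\omega^n$, from $\Delta_gf-X\cdot f=-2f$. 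Also $F_k\geq 3Ak$ on $K$, since $f\geq 3A$ there and $f$ is non-decreasing along the flow.
\item Together these give $\int_{\Phi_k(K)}|Y|_g^2e^{-f}\omega^n\leq Ce^{(2-6A)k}$, which is summable.
\item The sets $\Phi_k(K)$ cover $\{f>3A\}$. This follows from your (correct) Step 1 observation that backward flow lines reach a neighbourhood of the compact zero set of $X$.
\end{itemize}

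Neither a lower bound on $|X|$ nor any pointwise control of $|Y|_g$ is needed.
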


\begin{proof}
Let $M_{0}(X)$ denote the zero set of $X$ and choose
$A>\max\left\{1,\,\sup_{M_{0}(X)}f\right\}$.
This we can do because $M_{0}(X)$ is compact by
\cite[Proposition 3.5]{JunshengSong}. Let $\Phi_t$ denote the flow of $X$ for time $t$.
Since $X$ is complete \cite{zhang12}, $\Phi_t$ is defined
for all time.
Let $\Sigma:=f^{-1}(3A)$,
$N:=f^{-1}((-\infty,\,3A])$, and
\[
K:=\{\Phi_s(x)\,|\,x\in\Sigma,\ 0\leq s\leq1\}.
\]
Since $f$ is proper and bounded below as a consequence of
Theorem \ref{prop-sum-sgrs}(ii), $\Sigma$ and $N$ are compact
subsets of $M$, and $M_{0}(X)\subset N$. $K$ is clearly compact as the image of $[0,1]\times\Sigma$
under a continuous map.
Moreover, since $f$ is non-decreasing along the flow of $X$,
we have that $f\geq3A$ on $K$.

Write $\gamma_x(t):=\Phi_t(x)$ for the integral curve of
$X$ with $\gamma_x(0)=x\in M$.
We begin with the following claim.

\begin{claim}[\protect{\textnormal{cf.~\cite[Claim 2.32]{cds}}}]\label{hotmail}
Let $y\in M\setminus N$. Then there exists $x\in\Sigma$ and
$t_0>0$ such that $y=\gamma_x(t_0)$.
\end{claim}

\begin{proof}[Proof of Claim \ref{hotmail}]
Let $y\in M\setminus N$. Since $f$ is proper and bounded
from below and $X$ is complete, we know from
\cite[Proposition 2.27]{cds} that there exists a sequence
of times $t_j\to-\infty$ as $j\to\infty$ and a point
$p\in M_0(X)$ such that $\gamma_y(t_j)\longrightarrow p$
as $j\to\infty$. Now, by choice of $A$, we see that
\[
f(\gamma_y(t_j))\longrightarrow f(p)\leq A<3A
\qquad\textrm{as }j\to\infty.
\]
Consequently, there exists $t>0$ such that
$f(\gamma_y(-t))<3A<f(y)$.
By continuity, there exists $t_0\in(0,t)$ such that
$f(\gamma_y(-t_0))=3A$.
Taking $x:=\gamma_y(-t_0)\in f^{-1}(3A)=\Sigma$,
we see that $y=\Phi_{t_0}(x)=\gamma_x(t_0)$, as claimed.
\end{proof}

\noindent
For $y\in M\setminus N$, let $x\in\Sigma$ and $t_0>0$
be as in Claim \ref{hotmail}. Writing $t_0=k+s$, where
$k\in\mathbb{N}_0$ and $0\leq s<1$, we see that
\[
y=\Phi_{t_0}(x)=\Phi_k(\Phi_s(x))\in\Phi_k(K).
\]
We can therefore write
\begin{equation}\label{eq:weighted-L2-flow-cover}
M\setminus N
\subseteq\bigcup_{k=0}^{\infty}\Phi_k(K).
\end{equation}

Let $\omega$ and $d\mu$ denote the K\"ahler form and
volume form of $g$, respectively.
Set $d\mu_f:=e^{-f}d\mu$ and, for $t\geq0$, let
\[
F_t=\int_0^t f\circ\Phi_s\,ds,
\qquad
\omega_t=\Phi_t^*\omega,
\qquad
g_t=\Phi_t^*g.
\]
Since $X$ is real holomorphic and $X=\nabla^g f$, we have
$\mathcal L_X\omega=2i\partial\bar\partial f$. Thus
\[
\frac{d}{dt}\omega_t
=\Phi_t^*(\mathcal L_X\omega)
=2i\Phi_t^*(\partial\bar\partial f)
=2i\partial\bar\partial(f\circ\Phi_t).
\]
Upon integrating from $0$ to $t$, we find that
\begin{equation}\label{fuck}
\omega_t=\omega+2i\partial\bar\partial F_t.
\end{equation}

Next, note from our normalisation \eqref{normal-f} that
\begin{equation*}
\begin{split}
\mathcal L_X d\mu_f
&=\mathcal L_X(e^{-f}d\mu)=X\cdot(e^{-f})\,d\mu
  +e^{-f}\mathcal L_X d\mu=\left(-X\cdot f+\operatorname{div}_g(X)\right)
  e^{-f}d\mu\\
&=\left(\Delta_g f-X\cdot f\right)d\mu_f=-2f\,d\mu_f,
\end{split}
\end{equation*}
so that
\begin{equation*}
\begin{split}
\frac{d}{dt}\bigl(\Phi_t^*d\mu_f\bigr)=\Phi_t^*(\mathcal L_X d\mu_f)=-2\Phi_t^*(f\,d\mu_f)=-2(f\circ\Phi_t)\,\Phi_t^*d\mu_f.
\end{split}
\end{equation*}
Solving this yields
\begin{equation}\label{eq:weighted-L2-jacobian}
\Phi_t^*d\mu_f=e^{-2F_t}d\mu_f.
\end{equation}

Finally, for $y\in M$, we see from
\eqref{pressure} that
\begin{equation*}
\begin{split}
\frac{d}{dt}\bigl(f(\gamma_y(t))+n\bigr)=|\nabla^g f|_g^2(\gamma_y(t))=2\bigl(f(\gamma_y(t))+n\bigr)-R_g(\gamma_y(t))\leq2\bigl(f(\gamma_y(t))+n\bigr)
\end{split}
\end{equation*}
for all $t\in\mathbb R$ by
Theorem \ref{prop-sum-sgrs}(i).
Here $n=\operatorname{dim}_{\mathbb C}M$.
Thus, for all $t\in\mathbb R$,
\begin{equation*}
\begin{split}
\frac{d}{dt}
\left(e^{-2t}\bigl(f(\gamma_y(t))+n\bigr)\right)
&=\left(
\frac{d}{dt}f(\gamma_y(t))
-2\bigl(f(\gamma_y(t))+n\bigr)
\right)e^{-2t}\leq0.
\end{split}
\end{equation*}
Since $f+n\geq0$ by \eqref{pressure} and
Theorem \ref{prop-sum-sgrs}(i), this means that
for all $t\geq0$,
\[
0\leq f(\gamma_y(t))+n
\leq e^{2t}(f(y)+n).
\]
Integrating from $0$ to $t$, we deduce that
\begin{equation*}
\begin{split}
0\leq F_t(y)+nt=\int_0^t\bigl(f(\Phi_s(y))+n\bigr)\,ds\leq(f(y)+n)\int_0^t e^{2s}\,ds=\frac{f(y)+n}{2}(e^{2t}-1).
\end{split}
\end{equation*}
Consequently,
\[
|F_t(y)|
\leq\frac{f(y)+n}{2}(e^{2t}-1)+nt.
\]
In particular, on every fixed compact set $B\subset M$,
\begin{equation}\label{eq:weighted-L2-F-bound}
\sup_B|F_t|\leq C(B)e^{2t}
\qquad\textrm{for all }t\geq0,
\end{equation}
for a positive constant $C(B)>0$ independent of $t$.

Let $\chi\in C_c^\infty(M)$ be a compactly supported
bump function with $\chi\geq0$ and $\chi=1$ in a
neighbourhood of $K$.
Since $\omega_t$ is positive-definite, integration by
parts on the fixed compact support of $\chi$ gives us that
\begin{equation*}
\begin{split}
\int_K\left(\operatorname{tr}_\omega\omega_t\right)\omega^n&\leq
\int_M\chi\left(\operatorname{tr}_\omega\omega_t\right)\omega^n=n\int_M\chi\,\omega^n
  +2\int_M\chi\left(\Delta_\omega F_t\right)\omega^n=n\int_M\chi\,\omega^n
  +2\int_M F_t\left(\Delta_\omega\chi\right)\omega^n\\
&\leq Ce^{2t},
\end{split}
\end{equation*}
where we have used \eqref{fuck} in the first equality and
\eqref{eq:weighted-L2-F-bound} in the last line.

Now, the fact that $[X,Y]=0$ implies that $\Phi_t^*Y=Y$.
Since $g_t$ and $g$ are positive-definite and
$J$-Hermitian, we also have that
\begin{equation}\label{bollox1}
0\leq g_t(Y,Y)
\leq |Y|_g^2\cdot\left(\operatorname{tr}_\omega\omega_t\right).
\end{equation}
Moreover, since $f$ is non-decreasing along the flow of $X$,
we see that $f\circ\Phi_s\geq f\geq3A\geq2A$ on $K$ for every
$s\geq0$, so that $F_t\geq2At$ on $K$.
Using \eqref{eq:weighted-L2-jacobian} and \eqref{bollox1},
we obtain, for every integer $k\geq0$,
\begin{equation*}
\begin{split}
\int_{\Phi_k(K)}|Y|_g^2\,d\mu_f&=\int_K\Phi_k^*
  \bigl(|Y|_g^2\,d\mu_f\bigr)=\int_K\left(|Y|_g^2\circ\Phi_k\right)
  \Phi_k^*d\mu_f=\int_K\left(|Y|_g^2\circ\Phi_k\right)
  e^{-2F_k}\,d\mu_f\\
&=\int_K g_k(Y,Y)e^{-2F_k}\,d\mu_f\leq\int_K |Y|_g^2
  \left(\operatorname{tr}_\omega\omega_k\right)
  e^{-2F_k}\,d\mu_f\\
  &\leq e^{-4Ak}\int_K |Y|_g^2
\left(  \operatorname{tr}_\omega\omega_k\right)\,d\mu_f\leq C(Y,\,K)e^{-4Ak}
  \int_K\left(\operatorname{tr}_\omega\omega_k\right)\,\omega^n\\
  &\leq C(Y,\,K)e^{-4Ak}e^{2k}=C(Y,\,K)e^{-2(2A-1)k}.
\end{split}
\end{equation*}
Summing over $k$ and using
\eqref{eq:weighted-L2-flow-cover} yields a bound on
$\int_{M\setminus N}|Y|_g^2\,d\mu_f$.
The result now follows.
\end{proof}

We also know that such vector fields are complete, thanks to:
\begin{lemma}[\protect{\textnormal{cf.~\cite[Lemma 2.34]{cds}}}]\label{completee}
Let $(M,\,g,\,X)$ be a complete shrinking gradient K\"ahler-Ricci soliton. Then every real vector field $Y$ on $M$ with $[X,\,Y]=0$ is complete.
\end{lemma}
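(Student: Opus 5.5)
The plan is to show that the maximal integral curves of $Y$ cannot escape to infinity in finite time. I will do this by controlling $|Y|_g$ along the flow of $X$, so that $|Y|_g$ grows at most linearly in $\sqrt{f}$. The standard criterion then applies: since $g$ is complete, if $|Y|_g\leq C(1+d_g(p,\cdot))$ then $Y$ is complete. Indeed, along an integral curve $\sigma$ of $Y$ one has $\frac{d}{ds}d_g(p,\sigma(s))\leq C(1+d_g(p,\sigma(s)))$, so by Gronwall $\sigma$ stays in a compact set on bounded time intervals and therefore extends.

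To get the growth bound, I reuse the setup from the proof of Proposition \ref{keywest}. Choose $A$, $\Sigma=f^{-1}(3A)$, $N$ and $K$ as there. By Claim \ref{hotmail} and \eqref{eq:weighted-L2-flow-cover}, every $y\in M\setminus N$ has the form $y=\Phi_{t}(x)$ with $x\in K$ and $t\geq 0$. Since $[X,Y]=0$, we have $Y(y)=d\Phi_t(Y(x))$, so
\[
|Y|_g^2(y)=g_t(Y,Y)(x).
\]
Thus I need a pointwise bound on $g_t$ on the compact set $K$, rather than the integral bound used before. For this I would compute $\frac{d}{dt}g_t(V,V)=\Phi_t^*(\mathcal L_Xg)(V,V)=2\Phi_t^*(g-\Ric(g))(V,V)$. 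Without a Ricci bound this does not close directly.

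Instead I would exploit the Kähler/holomorphic structure. Write $Y=Y_1+JY_2$-type decompositions, or better, use $JX$ and the fact that $X$ is a gradient. Decompose $Y(x)$ into components along $X$, $JX$ and the $g$-orthogonal complement $H$ of $\operatorname{span}\{X,JX\}$ on $K$, where $X\neq0$ because $f\geq 3A$ exceeds $\sup_{M_0(X)}f$. The components along $X$ and $JX$ are transported exactly, since $d\Phi_t X=X$ and $d\Phi_tJX=JX$. Their lengths are therefore $|X|$, which is $\leq\sqrt{2f+2n}\leq C(1+d_g(p,\cdot))$ by \eqref{pressure} and Theorem \ref{prop-sum-sgrs}(ii). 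This part is linear, as desired.

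The main obstacle is the transverse part $d\Phi_t(H)$. The key idea I would try is to control it through the level sets of $f$. The flow maps level sets to level sets, and $\frac{d}{dt}f(\gamma(t))=|X|^2\geq$ (a positive multiple of $f$ for large $f$ by \eqref{pressure}, if $R_g$ is controlled). I also have the inequality $f(\gamma_y(t))+n\leq e^{2t}(f(y)+n)$. The aim is to show $|d\Phi_t V|^2\leq Ce^{2t}|V|^2$ for $V\in H$, since then $|Y|^2(y)\leq Ce^{2t}\leq C'(f(y)+n)$. This would follow from $\Ric\geq 0$ on the relevant region, which is unavailable. As a fallback, I would mimic the integral argument: combine the Proposition \ref{keywest} bound with a local mean-value/elliptic estimate for the holomorphic field $Y$ (the components of a holomorphic vector field are holomorphic functions in charts, hence subharmonic). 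The goal is to upgrade the $L^2(e^{-f})$ control to a pointwise polynomial bound, then apply the Gronwall completeness criterion above. If only the weaker bound $|Y|\leq Ce^{cd}$ emerges, I would instead argue completeness by flowing along $X$: an integral curve of $Y$ escaping in finite time can be pulled back by $\Phi_{-t}$ (which commutes with $Y$) into the compact region $N\cup K$, where $Y$ is bounded, giving a contradiction.
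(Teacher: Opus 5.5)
\textbf{There is a genuine gap in your main line; the correct proof is the fallback in your last sentence.}

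Your main strategy tries to prove something the lemma neither needs nor can, in general, deliver. The statement is about an \emph{arbitrary} smooth real vector field $Y$ with $[X,Y]=0$, not a holomorphic one. So the fallback via holomorphic components and mean-value estimates does not apply.

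For such $Y$, the value at $\Phi_t(x)$ is $d\Phi_t(Y(x))$. As you observe for the transverse part $H$, you cannot control its size without a lower bound on $\Ric(g)$. Hence no bound of the form $|Y|_g\leq C(1+d_g(p,\cdot))$ is established, and the Gronwall criterion cannot be invoked. Even for holomorphic $Y$, chart-wise mean-value constants are not uniform without curvature control. The paper gets growth for holomorphic fields in Lemma \ref{sharp-growth-clim-Y} by a different maximum-principle argument, and the present lemma does not need it.

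The idea in your last sentence is the paper's approach: it cites \cite[Lemma 2.34]{cds}, whose only required input is compactness of the zero set of $X$, now supplied by \cite{JunshengSong}. This argument needs no growth bound on $|Y|_g$ at all; the condition ``if only the weaker bound emerges'' is superfluous.

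It should be phrased via a uniform existence time rather than by ``pulling back'' an escaping curve. A curve that leaves every compact set cannot be moved into $N\cup K$ by a single $\Phi_{-t}$. Concretely:
\begin{enumerate}
\item By Claim \ref{hotmail}, and since $\Sigma\subset N$, every point of $M$ is $\Phi_t(x)$ for some $x$ in the compact set $N$ and some $t\geq 0$. Here $\Phi_t$ is globally defined because $X$ is complete \cite{zhang12}.
\item By compactness of $N$ there is $\epsilon>0$ such that the integral curve of $Y$ through any $x\in N$ exists on $(-\epsilon,\epsilon)$.
\item Since $[X,Y]=0$ gives $(\Phi_t)_*Y=Y$, the map $\Phi_t$ carries this curve to the integral curve of $Y$ through $\Phi_t(x)$, defined on the same interval.
\item Thus every integral curve of $Y$ exists for time at least $\epsilon$ beyond each of its points. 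If a maximal curve $\sigma$ were defined only up to $T<\infty$, restarting at $\sigma(T-\epsilon/2)$ would extend it past $T$, a contradiction.
\end{enumerate}
Present only this argument and drop the growth-estimate discussion.
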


\begin{proof}
This follows from \cite[Lemma 2.34]{cds} because the zero set of $X$ is compact by \cite[Proposition 3.5]{JunshengSong}.
\end{proof}

Next, we investigate the growth of real holomorphic vector fields commuting with $X$ lying in $L^2(e^{-f}\omega^n)$, the norm of which we denote by $\|\cdot\|_{L^2_f}$.
\begin{lemma}[{\protect{\textnormal{cf.~\cite[Claim A.14]{cds}}}}]\label{sharp-growth-clim-Y}
Let $(M,\,g,\,X)$ be a complete shrinking gradient K\"ahler-Ricci
soliton of complex dimension $n$ with K\"ahler form $\omega$ and soliton vector field
$X=\nabla^{g}f$ for a smooth real-valued function
$f:M\to\mathbb{R}$. There exists $C>0$ such that every smooth real holomorphic
vector field $Y$ on $M$ with $[X,Y]=0$ satisfies
\begin{equation*}
|Y|^2_{g}\leq C\|Y\|^2_{L^2_f}(f+n+1)\quad\text{on $M$}.
\end{equation*}
\end{lemma}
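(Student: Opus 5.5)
We prove a pointwise bound on $|Y|^2_g$ in terms of its weighted $L^2$ norm by working with the holomorphic $(1,0)$-part of $Y$ and a drift-Laplacian inequality. Write $Y^{1,0}=\frac12(Y-iJY)$; this is a holomorphic vector field, and $|Y|^2_g$ is a constant multiple of $|Y^{1,0}|^2$. Since $[X,Y]=0$ and $X$ is real holomorphic and gradient, $Y$ also commutes with $JX$.

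\textbf{Step 1 (Bochner inequality).} For a holomorphic vector field $V$ on a Kähler manifold, the Bochner formula gives
\[
\Delta_\omega |V|^2=|\nabla V|^2-\operatorname{Ric}(V,\bar V)
\]
with the appropriate normalisation. Using the soliton equation $\operatorname{Ric}=g-\nabla^2 f$ to replace the Ricci term, I expect an identity of the form
\[
\Delta_\omega|V|^2-\tfrac{X}{2}\cdot|V|^2=|\nabla V|^2-|V|^2+(\text{terms involving }[X,V]).
\]
The last terms vanish because $[X,V]=0$, via $\mathcal L_X V=\nabla_XV-\nabla_VX$ and $\nabla_V X=\nabla_V\nabla f$. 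The conclusion I aim for is that $u:=|Y|^2_g$ satisfies
\[
\Delta_{\omega}u-\tfrac{X}{2}\cdot u\geq -u.
\]
This matches the equation $\Delta_\omega f-\frac X2\cdot f=-f$ from Remark \ref{rk-sol-id}, which is why $f+n+1$ should serve as a barrier. Getting the signs and constants in this identity exactly right, as in \cite[Claim A.14]{cds}, is a routine but necessary check.

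\textbf{Step 2 (mean value inequality on a compact set).} Let $\phi:=f+n+1\geq1$. It satisfies $\Delta_\omega\phi-\frac X2\cdot\phi=-\phi+(n+1)\geq-\phi$, and more usefully
\[
(\Delta_\omega-\tfrac X2)\phi=-(\phi-n-1).
\]
On the compact set $N=\{f\le 3A\}$ (any fixed sublevel set), $u$ is a subsolution of a uniformly elliptic equation with bounded coefficients. A local Moser or mean value inequality on a slightly larger compact set then gives $\sup_N u\le C\|Y\|^2_{L^2_f}$, where $C$ depends only on the geometry there, since $e^{-f}$ is bounded below on compact sets.

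\textbf{Step 3 (barrier outside $N$).} On $M\setminus N$, compare $u$ with $w:=C'\|Y\|^2_{L^2_f}\phi$, where $C'$ is chosen so that $w\ge u$ on $\partial N$. Both functions satisfy
\[
(\Delta_\omega-\tfrac X2)(\cdot)\ge-(\cdot)
\]
in the appropriate sense, but the maximum principle for the operator $\Delta-\frac X2+1$ on the noncompact region needs care. The zeroth-order term has the wrong sign, and there is no a priori growth control on $u$.

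The way around this, which I expect to be the main obstacle, is to work instead with the ratio $v=u/\phi$. A computation gives
\[
(\Delta_\omega-\tfrac X2)v+2\langle\nabla\log\phi,\nabla v\rangle\ge \frac{n+1}{\phi}\,v\ge0.
\]
In this form the zeroth-order term has the good sign on $M\setminus N$. One then still needs that $v$ does not attain a large supremum at infinity. I would control this either by the weighted $L^2$ bound of Proposition \ref{keywest} combined with local mean value inequalities on balls of radius $\sim 1/\sqrt f$, or by an Omori–Yau / weighted maximum principle using $f$ as an exhaustion.

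A cleaner alternative uses the flow directly. Since $\Phi_t^*Y=Y$, we have
\[
u(\Phi_t(x))=g_t(Y,Y)(x).
\]
Differentiating in $t$ gives
\[
\frac{d}{dt}\,u(\gamma_x(t))=(\mathcal L_Xg)(Y,Y)=2\nabla^2f(Y,Y)=2u-2\operatorname{Ric}(Y,Y)\le 2u+\text{?}.
\]
This would bound the growth of $u$ along flow lines at the same exponential rate $e^{2t}$ as $f+n$, but only if $\operatorname{Ric}(Y,Y)$ has a lower bound. Such a bound is not available, so I would fall back on the ratio maximum principle above.
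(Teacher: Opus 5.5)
\textbf{There is a genuine gap in Step 3.} It begins with a sign error that invalidates the barrier you chose.

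With $L=\Delta_\omega-\tfrac X2=\tfrac12(\Delta_g-\nabla_X)$ one has $L(ab)=a\,Lb+b\,La+\langle\nabla a,\nabla b\rangle_g$. For $u=v\phi$ this gives the exact identity
\[
Lv+\langle\nabla\log\phi,\nabla v\rangle_g=\frac{(L+1)u-v\,(L+1)\phi}{\phi}\geq-\frac{n+1}{\phi}\,v,
\]
since $(L+1)\phi=(L+1)(f+n+1)=n+1>0$. So the zeroth-order term has the bad sign. Like $u$, the function $f+n+1$ is a \emph{subsolution} of $L+1$, and comparing two subsolutions gives nothing.

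The conclusion you want from this step is in fact false. Take the Gaussian soliton on $\mathbb{C}^n$, normalised as in \eqref{pressure} so that $f=\frac{|x|^2}{2}-n$, and $Y=X$. Then $|Y|_g^2/\phi=2-2/\phi$ is strictly increasing outward, so $\sup_{M\setminus N}v>\sup_{\partial N}v$ for every sublevel set $N$.

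The barrier must be $f$ itself on a region $\{f\geq R\}$, where $Lf=-f$ holds exactly and the zeroth-order terms cancel. The paper uses $|Y|_g/f^{1/2}$, for which a nonnegative potential $\frac14|\nabla^g\ln f|_g^2$ survives. The constant $n+1$ enters only at the end, to cover the compact region.

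\textbf{The maximum principle at infinity is missing.} You flag it but leave it open, and neither suggested route works as stated. Mean value inequalities on balls of radius $f^{-1/2}$ would need uniform local Sobolev control near infinity, which is unavailable without curvature bounds. Omori--Yau presupposes an upper bound on $v$, which is what is to be proved.

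The missing idea is that weighted integrability replaces growth control. Lichnerowicz's identity $\Delta_gY+\Ric(Y)=0$, the soliton equation and $[X,Y]=0$ give $\Delta_gY-\nabla_XY+Y=0$. With Kato's inequality, $w:=|Y|_g/f^{1/2}$ then satisfies
\[
\Delta_gw-\langle\nabla^g(f-\ln f),\nabla^gw\rangle_g\geq\tfrac14|\nabla^g\ln f|_g^2\,w\qquad\text{on }\{f>0\}.
\]
The operator on the left is symmetric with respect to $fe^{-f}\omega^n$, and $w\in L^2(fe^{-f}\omega^n)$ is \emph{exactly} Proposition \ref{keywest}. An integration-by-parts argument as in \cite[Lemma 4.9]{conlon321} then gives $\sup_{\{f\geq R\}}w\leq\sup_{\{f=R\}}w$ for $R$ large, using compactness of the zero set of $X$. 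Your Step 2 then concludes.

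Working with $|Y|^2_g$ instead of $|Y|_g$ breaks this matching. The function $|Y|^2_g/f$ lies in $L^2$ of its symmetrising measure $f^2e^{-f}\omega^n$ only if $|Y|_g\in L^4(e^{-f}\omega^n)$, which is not known.

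Your Steps 1 and 2 are fine and agree with the paper.
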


\begin{proof}
We start by taking note of the following crucial equation that holds pointwise on $M$ and is independent of the soliton structure. For a real holomorphic vector field $Y$, it holds true that
\begin{equation*}
\Delta_g Y+\Ric(Y)=0.
\end{equation*}
This observation is due to Lichnerowicz and its proof can be found for instance in \cite[Proposition $4.79$]{book:Ballmann}. Since $Y$ commutes with $X$, $Y$ satisfies the following elliptic equation:
\begin{equation*}
\Delta_fY+Y:=\Delta_g Y-\nabla^g_XY+Y=\Delta_g Y-\nabla_XY+\Ric(Y)+\nabla_YX=0,
\end{equation*}
where we have used the soliton equation \eqref{krseqn} with $\lambda=1$. By Kato's inequality, it therefore follows that
\begin{equation}\label{Y-lap}
\Delta_f|Y|_g\geq -|Y|_g\quad\text{on $M$}
\end{equation}
in a weak sense.

Since $\Delta_ff=-2f$, a direct computation on the set where $\{f>0\}$ shows that
\begin{equation}\label{barrier-f-sqrt}
\Delta_f\left(f^{-\frac{1}{2}}\right)=f^{-\frac{1}{2}}+\frac{3}{4}|\nabla^gf|^2_gf^{-\frac{5}{2}}.
\end{equation}
Using \eqref{Y-lap} and \eqref{barrier-f-sqrt}, we compute that
\begin{equation}\label{finito}
\begin{split}
\Delta_{f}\left(\frac{|Y|_g}{f^{\frac{1}{2}}}\right)&\geq -\frac{|Y|_g}{f^{\frac{1}{2}}}+|Y|_g\left(f^{-\frac{1}{2}}+\frac{3}{4}|\nabla^gf|^2_gf^{-\frac{5}{2}}\right)+2g\left(\nabla^g|Y|_g,\nabla^gf^{-\frac{1}{2}}\right)\\
&\geq\frac{3}{4}|\nabla^g\ln f|^2\frac{|Y|_g}{f^{\frac{1}{2}}}-g\left(\nabla^g\left(\frac{|Y|_g}{f^{\frac{1}{2}}}\right),\nabla^g\ln f\right)\\
&\qquad-\frac{1}{2}|\nabla^g\ln f|^2\frac{|Y|_g}{f^{\frac{1}{2}}}\\
&\geq \frac{1}{4}|\nabla^g\ln f|^2\frac{|Y|_g}{f^{\frac{1}{2}}}-g\left(\nabla^g\left(\frac{|Y|_g}{f^{\frac{1}{2}}}\right),\nabla^g\ln f\right),
\end{split}
\end{equation}
a computation valid on the set where $f$ is strictly positive. 

As $u:=\frac{|Y|_g}{f^{\frac{1}{2}}}$ lies in $L^2(e^{-f+\log f}\omega^n)$ by Proposition \ref{keywest} and since the zero set of $X$ is compact \cite{JunshengSong}, a direct application of (the proof of) \cite[Lemma 4.9]{conlon321} with $V:=\frac{1}{4}|\nabla^g\ln f|^2$ shows that there exists $R>0$ such that $\{f=R\}$ is a smooth hypersurface with $\{f\leq R\}$ containing the zero set of $X$ and such that
\begin{equation*}
\sup_{\{f\,\geq\, R\}}u\leq \sup_{\{f= R\}}u.
\end{equation*}
This implies that there exists $C>0$ independent of $Y$ such that 
\begin{equation}\label{ptwise-Y}
|Y|_g\leq C(f+n+1)^{\frac{1}{2}}\sup_{\{f\,\leq\, R\}}|Y|_g
\end{equation}
pointwise on $M$. Here, $f+n+1>0$ thanks to \eqref{pressure}.

Now, as $\Delta_fY+Y=0$, a De Giorgi-Nash-Moser iteration applied to $\{f\,\leq \, 2R\}$ shows that there exists $C=C(n,g,R)>0$ satisfying $\sup_{\{f\,\leq\, R\}}|Y|^2_g\leq C\int_{\{f\,\leq\,2R\}}|Y|^2_g\,\omega^n$. Therefore, there exists a (possibly larger) positive constant $C=C(n,g,R)$ such that $\sup_{\{f\,\leq\, R\}}|Y|^2_g\leq C\|Y\|^2_{L^2_f}$. Combining this estimate with \eqref{ptwise-Y} yields the desired result.
\end{proof}

We next show that the Lie algebra $\mathfrak{aut}^{X}(M)$ of holomorphic vector fields commuting with $X$ is finite-dimensional. 

\begin{prop}[\protect{\textnormal{cf.~\cite[Proposition 5.7 \& Remark 5.8]{cds}}}]\label{finitee}
Let $(M,\,g,\,X)$ be a complete shrinking gradient K\"ahler-Ricci soliton 
and let $\mathfrak{aut}^{X}(M)$ denote the space of all real holomorphic vector fields $Y$ on $M$ with $[X,\,Y]=0$. Then $\mathfrak{aut}^{X}(M)$ is finite-dimensional.
\end{prop}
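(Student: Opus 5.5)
Every $Y\in\mathfrak{aut}^{X}(M)$ lies in $L^{2}(e^{-f}\omega^{n})$ by Proposition \ref{keywest}, so $\|\cdot\|_{L^2_f}$ is a genuine norm on the real vector space $\mathfrak{aut}^{X}(M)$. Finite-dimensionality will follow once the closed unit ball $\mathcal{B}:=\{Y\in\mathfrak{aut}^{X}(M)\,|\,\|Y\|_{L^2_f}\leq1\}$ is shown to be compact in this norm, by Riesz's lemma. I will show that every sequence in $\mathcal{B}$ has a subsequence converging in $L^2_f$ to an element of $\mathcal{B}$.

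\emph{Step 1 (local compactness).} Let $(Y_j)\subset\mathcal{B}$. By Lemma \ref{sharp-growth-clim-Y}, $|Y_j|_g^2\leq C(f+n+1)$ on $M$ with $C$ independent of $j$, so the $Y_j$ are uniformly bounded on each compact set. Each $Y_j$ solves the linear elliptic system $\Delta_g Y_j+\Ric(Y_j)=0$ (equivalently $\Delta_fY_j+Y_j=0$, as in the proof of Lemma \ref{sharp-growth-clim-Y}). Alternatively, $Y_j^{1,0}$ is a holomorphic vector field, so Cauchy estimates in local holomorphic charts apply. Either way, interior elliptic estimates give uniform $C^k$ bounds on compact sets for all $k$. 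Arzel\`a--Ascoli and a diagonal argument then yield a subsequence converging in $C^\infty_{loc}$ to a smooth vector field $Y$. Holomorphicity and the relation $[X,Y]=0$ are closed conditions under $C^1_{loc}$ convergence, so $Y\in\mathfrak{aut}^{X}(M)$. Fatou's lemma gives $\|Y\|_{L^2_f}\leq1$.

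\emph{Step 2 (uniform tail control).} For $R>0$ and every $Z\in\mathcal{B}$, Lemma \ref{sharp-growth-clim-Y} gives
\[
\int_{\{f\geq R\}}|Z|_g^2\,e^{-f}\omega^n\leq C\int_{\{f\geq R\}}(f+n+1)e^{-f}\omega^n .
\]
The right-hand side is independent of $Z$ and tends to $0$ as $R\to\infty$. Indeed, Theorem \ref{prop-sum-sgrs}(ii) gives quadratic growth of $f$ and Theorem \ref{prop-sum-sgrs}(iii) gives polynomial volume growth, so $(f+n+1)e^{-f}$ is integrable. Since $\{f\leq R\}$ is compact ($f$ is proper), I split $\|Y_j-Y\|^2_{L^2_f}$ into the integral over $\{f\leq R\}$ and the integral over $\{f\geq R\}$. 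The first tends to $0$ by uniform convergence on compact sets. The second is at most $4C\int_{\{f\geq R\}}(f+n+1)e^{-f}\omega^n$, which is small for $R$ large, using $\|Y_j\|_{L^2_f},\|Y\|_{L^2_f}\leq1$. Hence $Y_j\to Y$ in $L^2_f$, the ball $\mathcal{B}$ is compact, and $\dim\mathfrak{aut}^{X}(M)<\infty$.

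The main obstacle is the uniformity in Step 2: the pointwise bound must hold with a constant that depends only on the norm $\|Y\|_{L^2_f}$ and not on $Y$ itself. This is exactly what Lemma \ref{sharp-growth-clim-Y} provides, and it is where compactness of the zero set of $X$ \cite{JunshengSong} enters, replacing the Ricci curvature bound used in \cite{cds}. The remaining steps are standard elliptic regularity, which is local and needs no curvature assumption at infinity.
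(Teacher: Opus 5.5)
Your proof is correct and follows the paper's argument essentially step for step. You use Proposition \ref{keywest} to make $\|\cdot\|_{L^2_f}$ a norm, the uniform pointwise bound of Lemma \ref{sharp-growth-clim-Y}, interior elliptic estimates with Arzel\`a--Ascoli for local smooth convergence, and the tail estimate from Theorem \ref{prop-sum-sgrs}(ii)--(iii) to upgrade to $L^2_f$-convergence, and you conclude by Riesz; the only differences are cosmetic: you skip the paper's Banach--Alaoglu and closedness step (not needed, since Riesz's lemma holds in any normed space) and use Fatou to keep the limit in the unit ball.
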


\begin{proof}
By Proposition \ref{keywest}, $\mathfrak{aut}^{X}(M)\subseteq L^2(e^{-f}\omega^n)$. By Riesz's theorem, it suffices to show that $\mathfrak{aut}^{X}(M)$ is closed (hence a Banach space) and that the closed unit ball of $\mathfrak{aut}^{X}(M)$ (with respect to the norm induced by $L^2(e^{-f}\omega^n)$) is (sequentially) compact. 

Let $(Y_i)_i$ be a bounded sequence in $\mathfrak{aut}^{X}(M)$. By Banach-Alaoglu's theorem, there exists $Y_{\infty}\in L^2(e^{-f}\omega^n)$ such that $(Y_i)_i$ converges (up to a subsequence) weakly to $Y_{\infty}\in L^2(e^{-f}\omega^n)$. 
We claim that $Y_{\infty}\in\mathfrak{aut}^{X}(M)$ and that $(Y_i)_i$ converges (up to a subsequence) strongly to $Y_{\infty}\in L^2(e^{-f}\omega^n)$. 

By Lemma \ref{sharp-growth-clim-Y}, there exists $C>0$ such that for all indices $i$, 
\begin{equation}\label{linear-growth-Y-i}
|Y_i|^2_{g}\leq C(f+n+1)\quad\text{on $M$}.
\end{equation}
Now, as $\Delta_fY_i+Y_i=0$ on $M$ (as observed in the proof of Lemma \ref{sharp-growth-clim-Y}), Schauder interior elliptic estimates show that for each $k\geq 0$ and $R>0$, there exists $C_{k,R}=C(n,k,g,R)>0$ such that $|\nabla^{g,\,k}Y_{i}|_g\leq C_{k,R}$ on $\{f\,\leq\,R\}$. By a diagonal argument and repeated applications of the Arzel\`a-Ascoli theorem, there exists a subsequence of $(Y_i)_i$, still denoted by $(Y_i)_i$, that converges locally smoothly to $Y_{\infty}$ on $M$. This proves in particular that $Y_{\infty}\in \mathfrak{aut}^{X}(M)$ and that $\mathfrak{aut}^{X}(M)$ is closed.

For a given $\varepsilon>0$, \eqref{linear-growth-Y-i} together with Theorem \ref{prop-sum-sgrs}(ii) and (iii) shows that there exists $R_{\varepsilon}=R(n,g,\varepsilon)>0$ such that 
\begin{equation}\label{tail-est}
\int_{\{f\,\geq\,R_{\varepsilon}\}}|Y_i|^2_g\,e^{-f}\omega^n\leq \varepsilon\quad\textrm{for all $i$.} 
\end{equation}
Since $(Y_i)_i$ converges locally uniformly to $Y_{\infty}$, it converges strongly to $Y_{\infty}$ in the $L^2_f$-norm on $\{f\,\leq\, R_{\varepsilon}\}$. The uniform tail estimate \eqref{tail-est} shows then that $(Y_i)_i$ converges strongly to $Y_{\infty}$ in the $L^2_f$-norm on $M$ as required since $\varepsilon>0$ is arbitrary. This shows that the closed unit ball of $\mathfrak{aut}^{X}(M)$ is compact, hence $\mathfrak{aut}^{X}(M)$ is finite-dimensional.
\end{proof}

\subsection{The automorphism groups of a shrinking K\"ahler-Ricci soliton}

As the following lemma demonstrates, the connected component of the identity of the Lie group of holomorphic isometries of a shrinking gradient K\"ahler-Ricci soliton $(M,\,J,\,g)$ that commute with the flow of $X$ is a compact Lie group. We denote this Lie group by $G_{0}^{X}$.  

\begin{lemma}[\protect{\textnormal{\cite[Lemma 2.8]{conlon321}}}]\label{class}
$G_{0}^{X}$ is a compact Lie group with respect to the compact-open topology.
\end{lemma}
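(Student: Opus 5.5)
The plan is to realise $G_{0}^{X}$ as a closed subgroup of the isometry group of $(M,\,g)$ whose orbits all pass through one fixed compact set. The compact-open topology then makes it compact via Arzel\`a--Ascoli. We may assume $M$ is non-compact; in the compact case the full isometry group is already compact.

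\textbf{Step 1: $G_{0}^{X}$ preserves $f$.}
Every $\phi\in G_{0}^{X}$ is an isometry commuting with the flow of $X$, so $\phi_{*}X=X$. Since $X=\nabla^{g}f$ and $\phi$ is an isometry, we get $\nabla^{g}(f\circ\phi)=\nabla^{g}f$. Hence $f\circ\phi-f$ is a constant $c_{\phi}$.

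To see that $c_{\phi}=0$, use the normalisation \eqref{pressure}. The scalar curvature and $|\nabla^{g}f|^{2}$ are both $\phi$-invariant, so $f=\tfrac12(|\nabla^g f|^2+R_g)-n$ is $\phi$-invariant as well.

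\textbf{Step 2: orbits stay in a fixed compact set.}
By Step 1, $\phi$ preserves every sublevel set $\{f\le R\}$. By Theorem \ref{prop-sum-sgrs}(ii), $f$ is proper and bounded below, so these sublevel sets are compact. Fix a point $p$ and choose $R>f(p)$; then $\phi(p)$ lies in the compact set $\{f\le R\}$ for every $\phi$.

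\textbf{Step 3: compactness.}
By Myers--Steenrod, the isometry group $\operatorname{Isom}(M,g)$ is a Lie group in the compact-open topology, and it acts properly on $M$. The subset of isometries sending $p$ into a compact set is therefore compact; concretely, these maps are equi-Lipschitz and pointwise relatively compact, so Arzel\`a--Ascoli applies.

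The following conditions are all closed under compact-open (indeed $C^{\infty}$) limits of isometries:
\begin{itemize}
\item being holomorphic, i.e.\ $\phi_{*}J=J$;
\item commuting with $\Phi_{t}$.
\end{itemize}
Smooth convergence of isometries follows from standard regularity. So the group of holomorphic isometries commuting with the flow of $X$ is a closed subgroup of $\operatorname{Isom}(M,g)$, hence a Lie group. Its identity component is $G_{0}^{X}$, which is closed because connected components of a topological group are closed. As a closed subset of a compact set, $G_{0}^{X}$ is compact.

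The Lie algebra of this group consists of complete Killing fields that are real holomorphic and commute with $X$. This is $\mathfrak{g}^{X}$; completeness is automatic for Killing fields on a complete manifold, or follows from Lemma \ref{completee}.

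\textbf{Main obstacle.}
The delicate point is Step 1, namely ruling out a nonzero constant shift $c_{\phi}$. Invariance of \eqref{pressure} handles it directly. Alternatively, if $c_{\phi}\neq 0$ then $\phi$ or $\phi^{-1}$ strictly decreases $\inf f$, which is impossible because $\inf f$ is attained by properness.
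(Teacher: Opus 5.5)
Your proof is correct and follows essentially the standard argument behind the cited \cite[Lemma 2.8]{conlon321}; the paper itself gives no proof. Isometries commuting with the flow of $X$ preserve the normalised potential $f$, properness of $f$ confines every orbit to a compact sublevel set, and such a closed subgroup of the Myers--Steenrod isometry group is therefore compact by properness of the action (Arzel\`a--Ascoli).
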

\noindent In particular, the closure of the flow of $JX$ in
$G_0^X$ is a real torus $T$ acting holomorphically,
effectively, and isometrically on the shrinking soliton, with Lie algebra
$\mathfrak t$ containing $JX$. We denote the Lie algebra of $G_{0}^{X}$ by $\mathfrak{g}^{X}$. It comprises holomorphic Killing vector fields on $M$ that commute with $X$.

As explained after \cite[Proposition 5.6]{cds}, the fact that 
the Lie algebra $\mathfrak{aut}^{X}(M)$ of holomorphic vector fields commuting with $X$ is finite-dimensional by Proposition \ref{finitee} and elements of this Lie algebra are complete by 
Lemma \ref{completee} implies that 
this Lie algebra can be integrated up into a Lie group in the following way.
\begin{prop}[\protect{\textnormal{cf.~\cite[Proposition 5.6]{cds}}}]\label{eastbeach}
Let $(M,\,g,\,X)$ be a complete shrinking gradient K\"ahler-Ricci soliton.
Then there exists a unique connected Lie group $\operatorname{Aut}_{0}^{X}(M)$ (endowed with the compact-open topology) of diffeomorphisms
acting effectively on $M$ with Lie algebra $\mathfrak{aut}^{X}(M)$.
\end{prop}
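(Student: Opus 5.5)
The plan is to invoke Palais' integrability theorem: if a finite-dimensional Lie algebra $\mathfrak{g}$ of vector fields on a manifold consists of complete vector fields, then there is a unique connected Lie group $G$ of diffeomorphisms acting effectively on $M$ whose Lie algebra of fundamental vector fields is $\mathfrak{g}$. We apply it with $\mathfrak{g}=\mathfrak{aut}^{X}(M)$. Three facts need to be checked in turn.

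\textbf{Lie algebra.} First, $\mathfrak{aut}^{X}(M)$ is a Lie subalgebra of the Lie algebra of smooth vector fields. The bracket of two real holomorphic vector fields is real holomorphic, since $\mathcal{L}_{[Y,Z]}J=[\mathcal{L}_Y,\mathcal{L}_Z]J=0$. By the Jacobi identity, if $[X,Y]=[X,Z]=0$ then $[X,[Y,Z]]=0$.

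\textbf{Finiteness and completeness.} Proposition \ref{finitee} gives that $\mathfrak{aut}^{X}(M)$ is finite-dimensional. Lemma \ref{completee} gives that every $Y\in\mathfrak{aut}^{X}(M)$ is complete, since it is a real vector field commuting with $X$. Palais' theorem then yields a connected Lie group $\operatorname{Aut}_{0}^{X}(M)$ acting effectively on $M$. Concretely, it is the group generated by the time-one flows $\exp(Y)$ for $Y\in\mathfrak{aut}^{X}(M)$, and its Lie algebra is $\mathfrak{aut}^{X}(M)$, possibly up to a sign convention. Uniqueness among connected groups with this Lie algebra acting effectively comes from the same theorem: any such group is generated by these flows. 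The group elements are biholomorphisms commuting with the flow of $X$, because each generator is the flow of a holomorphic field commuting with $X$.

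\textbf{Topology.} The remaining point, and the main technical obstacle, is to identify the Lie group topology with the compact-open topology. Palais' construction supplies a Lie topology which is a priori finer than the compact-open one. To compare them, I would note that $\operatorname{Aut}_{0}^{X}(M)$ is a connected Lie transformation group acting smoothly on $M$. The map $G\to\operatorname{Homeo}(M)$ is then continuous into the compact-open topology. It remains to show that the inclusion is a homeomorphism onto its image.

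One possible route is to appeal directly to \cite[Proposition 5.6]{cds}, whose argument uses only finite-dimensionality and completeness, exactly the two inputs now established without the Ricci bound.

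Alternatively, one can argue with the local smooth convergence established in the proof of Proposition \ref{finitee}. Suppose $\phi_i\to\mathrm{id}$ in the compact-open topology. For holomorphic maps, Cauchy estimates upgrade this to local $C^\infty$ convergence. Hence, for $i$ large, $\phi_i=\exp(Y_i)$ with $Y_i\to0$ in $\mathfrak{aut}^{X}(M)$. The last step uses that finite dimensionality makes all norms on $\mathfrak{aut}^{X}(M)$ equivalent to local $C^0$ norms on a fixed compact set, such as $\{f\le R\}$. This gives the equality of topologies.
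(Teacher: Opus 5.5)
Your main argument is the paper's: Palais' integrability theorem applied to $\mathfrak{aut}^{X}(M)$, with finite-dimensionality from Proposition \ref{finitee} and completeness from Lemma \ref{completee}, which is exactly how the paper argues by appeal to \cite[Proposition 5.6]{cds} (your first route for the topology). Your alternative topological route should be dropped: the step ``$\phi_i\to\mathrm{id}$ in $C^\infty_{\mathrm{loc}}$, hence $\phi_i=\exp(Y_i)$ with $Y_i\to 0$'' is asserted rather than proved, it is the whole content of the comparison, and it cannot follow from local smooth convergence plus finite-dimensionality alone, since for the one-dimensional algebra $\mathbb{R}v$ of translations along an irrational real direction $v$ on a complex torus one has $\exp(t_iv)\to\mathrm{id}$ smoothly with $t_i\to\infty$.
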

\noindent $\operatorname{Aut}_{0}^{X}(M)$ is of course the connected component of the identity of the holomorphic automorphisms of $M$ that commute with the
flow of $X$.

Since $\mathfrak{g}^{X}$ is a Lie subalgebra of $\mathfrak{aut}^{X}(M)$, it is clear that $G_{0}^{X}$ is a Lie subgroup of $\operatorname{Aut}_{0}^{X}(M)$.
Being compact, $G^{X}_{0}$ is a closed subgroup of $\operatorname{Aut}_{0}^{X}(M)$, hence is a compact Lie subgroup of $\operatorname{Aut}_{0}^{X}(M)$, with everything being relative to the compact-open topology.
\begin{lemma}[\protect{\textnormal{cf.~\cite[Lemma 5.10]{cds}}}]\label{northbeach}
 $G^{X}_{0}$ is a compact Lie subgroup of $\operatorname{Aut}_{0}^{X}(M)$ (with respect to the compact-open topology).
\end{lemma}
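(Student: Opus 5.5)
The plan is to realise $G_{0}^{X}$ as a subgroup of $\operatorname{Aut}_{0}^{X}(M)$, check that the inclusion is continuous for the compact-open topologies, and then use that a compact subgroup of a Lie group is automatically closed, hence an embedded Lie subgroup by Cartan's closed subgroup theorem.

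\emph{Step 1: inclusion of groups.} Every element of $G_{0}^{X}$ is a biholomorphism of $M$ commuting with the flow of $X$. Moreover, $\mathfrak{g}^{X}\subseteq\mathfrak{aut}^{X}(M)$, since holomorphic Killing fields commuting with $X$ are in particular real holomorphic and commute with $X$. The group $G_{0}^{X}$ is connected and therefore generated by $\exp(\mathfrak{g}^{X})$. These exponentials are flows of complete fields lying in $\mathfrak{aut}^{X}(M)$; completeness here is Lemma \ref{completee}. By the construction in Proposition \ref{eastbeach}, such flows lie in $\operatorname{Aut}_{0}^{X}(M)$. Hence $G_{0}^{X}\subseteq\operatorname{Aut}_{0}^{X}(M)$ as groups of diffeomorphisms. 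The subalgebra $\mathfrak{g}^{X}$ then integrates to a connected (a priori immersed) Lie subgroup of $\operatorname{Aut}_{0}^{X}(M)$. This subgroup coincides with $G_{0}^{X}$ as a set, because both are generated by the same one-parameter groups.

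\emph{Step 2: topologies agree.} I would argue that the inclusion $\iota\colon G_{0}^{X}\to\operatorname{Aut}_{0}^{X}(M)$ is a continuous injective homomorphism. Both groups carry the compact-open topology as groups of diffeomorphisms of $M$, and by Proposition \ref{eastbeach} the Lie group topology on $\operatorname{Aut}_{0}^{X}(M)$ is the compact-open one. Continuity of $\iota$ is then tautological.

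\emph{Step 3: closedness.} Since $G_{0}^{X}$ is compact by Lemma \ref{class}, its image $\iota(G_{0}^{X})$ is compact in the Hausdorff space $\operatorname{Aut}_{0}^{X}(M)$, and hence closed. Cartan's closed subgroup theorem now makes $\iota(G_{0}^{X})$ an embedded Lie subgroup. Its Lie algebra is $\{Y\in\mathfrak{aut}^{X}(M): \exp(tY)\in G_{0}^{X}\ \forall t\}=\mathfrak{g}^{X}$. Finally, $\iota$ is a continuous bijective map from a compact space onto a Hausdorff space, hence a homeomorphism onto its image. So the Lie structure $G_{0}^{X}$ inherits as a subgroup agrees with its own.

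The main obstacle is Step 2, specifically making sure that the topology given to $\operatorname{Aut}_{0}^{X}(M)$ in Proposition \ref{eastbeach} really is the compact-open topology and is Hausdorff. If that identification were only known for the intrinsic Lie topology, I would first argue abstractly instead. A continuous homomorphism between Lie groups is smooth, and the integrated subgroup from Step 1 is the image of the compact group $G_{0}^{X}$ under such a map, hence compact and closed. This reaches the same conclusion without needing to compare topologies directly.
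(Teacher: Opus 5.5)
Your proposal is correct and takes essentially the same route as the paper. The paper argues that since $\mathfrak{g}^{X}$ is a Lie subalgebra of $\mathfrak{aut}^{X}(M)$, $G_{0}^{X}$ is a Lie subgroup of $\operatorname{Aut}_{0}^{X}(M)$, and being compact it is closed, hence a compact Lie subgroup, all relative to the compact-open topology; you spell out what the paper leaves implicit, namely continuity of the inclusion (both groups carry the compact-open topology), Hausdorffness, Cartan's closed subgroup theorem, and the identification of the Lie algebra as $\mathfrak{g}^{X}$.
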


\subsection{The weighted volume functional}\label{fat}

Let $(M,\,g,\,X)$ be a complete shrinking gradient K\"ahler-Ricci soliton of complex dimension $n$ with complex structure $J$, K\"ahler form $\omega$, and soliton vector field $X=\nabla^g f$ for $f:M\to\mathbb{R}$ smooth. Since $JX$ is complete by Lemma \ref{completee}, the closure of the flow of $JX$ in the compact Lie group $G_{0}^{X}$ generates the action of a real torus $T$ with Lie algebra $\mathfrak{t}$ acting holomorphically, effectively, and isometrically on $(M,\,g,\,J)$ with $JX\in\mathfrak{t}$. 
The Lie algebra $\mathfrak{t}$ can be identified with real holomorphic Killing vector fields on $M$. Since $M$ is simply connected
\cite{carlos, JunshengSong}, the action of $T$ is automatically Hamiltonian, and so there is a moment map for the action. Recall from \cite[Definition 5.13]{cds}
the definition of the moment map for this action.

\begin{definition}[{Moment map, \cite[Definition 5.13]{cds}}]\label{moments}
Let $\langle\cdot\,,\,\cdot\rangle$ denote the natural pairing between $\mathfrak{t}$ and $\mathfrak{t}^{*}$.
Then we define the \emph{moment map $\mu:M\to\mathfrak{t}^{*}$ for the action of $T$ on $M$} by
$$u_{Y}(x)=\langle\mu(x),\,Y\rangle,\quad x\in M,\qquad\textrm{for all $Y\in\mathfrak{t}$},$$
where $u_{Y}$ is such that $\nabla^g u_{Y}=-JY$, $\mathcal{L}_{JX}u_{Y}=0$, and $\Delta_{\omega}u_{Y}+u_{Y}+\frac{1}{2}(JY)\cdot f=0.$
\end{definition}

\noindent The fact that the said normalisation of $u_{Y}$ exists is demonstrated in \cite[Proposition 5.12]{cds}. This does not require a curvature bound.

We next define the weighted volume functional
for complete shrinking gradient K\"ahler-Ricci solitons.
\begin{definition}[{Weighted volume functional, cf.~\cite[equation (2.3)]{tianzhu2}}]\label{modfut}
Let $(M,\,g,\,X)$ be a complete shrinking gradient K\"ahler-Ricci soliton of complex dimension $n$ with complex structure $J$, K\"ahler form $\omega$, and with soliton vector field $X=\nabla^{g}f$ for a smooth real-valued function $f:M\to\mathbb{R}$, endowed with the holomorphic, effective, isometric action of a real torus $T$ with Lie algebra $\mathfrak{t}$ containing $JX$. (As explained above, one can always find such a torus.)
Let $\mu$ denote the moment map of the action as prescribed in Definition \ref{moments}.
Let $Y\in\mathfrak{t}$ and let $u_{Y}:=\langle\mu,\,Y\rangle$ be the
Hamiltonian potential of $Y$ so that
$\mathcal{L}_{JX}u_{Y}=0$ and $\Delta_{\omega}u_{Y}+u_{Y}+\frac{1}{2}(JY)\cdot f=0$. Finally, let
$$\Lambda:=\{Y\in\mathfrak{t}:\textrm{$u_{Y}$ is proper and bounded below}\}\subseteq\mathfrak{t}.$$
Then the \emph{weighted volume functional} $F$ is defined by
\begin{equation*}
F:\Lambda\longrightarrow\mathbb{R}_{>0},\qquad F(Y)=\int_{M}e^{-\langle\mu,\,Y\rangle}\omega^{n}=\int_{M}e^{-u_{Y}}\omega^{n}.
\end{equation*}
\end{definition}

The set $\Lambda$ is an open cone in $\mathfrak{t}$ which is determined by the image of $M$ under $\mu$;
see \cite[Proposition A.4]{cds}. Since $f$ is proper and bounded below, $JX$, which by assumption lies in $\mathfrak{t}$, lies in $\Lambda$ so that $\Lambda$ is non-empty.
Thus, since the zero set of $X$ is compact \cite{JunshengSong}, 
the Duistermaat-Heckman theorem (cf.~\cite[Theorem A.3]{cds}) guarantees that $F$ is well-defined.

\section{Proof of Theorem \ref{uunique}}

\subsection{A Matsushima-type theorem}

Our set-up is as in Section \ref{fat}. The goal of this section is to establish a Matsushima theorem for shrinking K\"ahler-Ricci solitons without a Ricci curvature bound.
\begin{theorem}
[\protect{\textnormal{A Matsushima theorem for shrinking K\"ahler-Ricci solitons; cf.~\cite[Theorem 5.1]{cds}}}]\label{matty}
Let $(M,\,g,\,X)$ be a complete shrinking gradient K\"ahler-Ricci soliton. Then $$\mathfrak{aut}^{X}(M)=\mathfrak{g}^{X}\oplus J\mathfrak{g}^{X}.$$
\end{theorem}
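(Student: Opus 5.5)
We follow the classical Matsushima argument in weighted form, as in \cite[Theorem 5.1]{cds}, using Proposition \ref{keywest} and Lemma \ref{sharp-growth-clim-Y} in place of the Ricci curvature bound.

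\textbf{Step 1: potentials.} Let $Y\in\mathfrak{aut}^{X}(M)$. Since $Y$ is real holomorphic, the $(1,0)$-part $Y^{1,0}$ is a holomorphic vector field, and its dual $(0,1)$-form $g(Y^{1,0},\cdot)$ is $\bar\partial$-closed. As $M$ is simply connected \cite{carlos, JunshengSong}, we have $H^{0,1}$-type vanishing at the level of closed $1$-forms. We therefore seek complex potentials $\theta_Y=u+iv$ with $Y^{1,0}=\nabla^{1,0}\theta_Y$, i.e.\ $Y=\nabla u+J\nabla v$ up to normalisation. Concretely, the $1$-form $\alpha:=g(Y,\cdot)$ decomposes via the weighted Hodge theory of the operator $\Delta_f$. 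The weighted Bochner identity $\Delta_f Y+Y=0$ from the proof of Lemma \ref{sharp-growth-clim-Y}, together with the weighted $L^2$ bound of Proposition \ref{keywest} and the growth bound $|Y|^2\le C(f+n+1)$, justifies all integrations by parts: the cut-off errors are controlled by $\int|Y|^2|\nabla\chi_R|^2e^{-f}\to0$. Using these, we show that $\bar\partial$ of the dual form vanishes in $L^2_f$ and produce $\theta_Y$ with $\bar\partial\theta_Y=Y^{1,0}{}^\flat$. Here $\theta_Y$ is obtained by integrating along paths, which works because $M$ is simply connected and the form is closed. The closedness is the standard consequence of holomorphicity on a K\"ahler manifold, once we know there are no $L^2_f$ harmonic obstructions.

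\textbf{Step 2: the eigenvalue equation.} Normalise $\theta_Y$ so that $\Delta_f\theta_Y+\theta_Y=0$. To derive this, apply $\bar\partial$ to $\Delta_f\theta_Y+\theta_Y$: commuting $\bar\partial$ with $\Delta_f$ produces the $\rho_\omega+i\partial\bar\partial f=\omega$ term. This shows that $\Delta_f\theta_Y+\theta_Y$ is antiholomorphic and real-weighted-$L^2$, hence constant. Absorbing the constant into $\theta_Y$ gives the normalisation. Lemma \ref{sharp-growth-clim-Y} gives $|\nabla\theta_Y|^2\le C(f+n+1)$, so $\theta_Y$ has at most quadratic growth in $d_g$ and lies in the weighted Sobolev space $W^{1,2}_f$.

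\textbf{Step 3: splitting.} Since $\Delta_f$ is a real operator, the real and imaginary parts $u,v$ each satisfy $\Delta_f u+u=0$. The key point, and the main analytic obstacle, is to show that for such real $u$ the field $\nabla u$ is real holomorphic, i.e.\ $\nabla^{0,1}\nabla^{0,1}u=0$. For this we use the weighted Bochner formula
\[
\int_M|\nabla^{0,1}\nabla^{0,1}u|^2e^{-f}\omega^n=\int_M\bigl((\Delta_f u)^2-|\bar\partial u|^2\bigr)e^{-f}\omega^n,
\]
valid after cut-off thanks to the growth bounds. Evaluated on an eigenfunction with eigenvalue $1$, the right-hand side vanishes. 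The justification is exactly where the Ricci bound was formerly used. Here we instead rely on the compactness of the zero set of $X$ and the bounds of Proposition \ref{keywest} and Lemma \ref{sharp-growth-clim-Y}, applied to $\nabla u$, to make the cut-off terms tend to zero. We will also need an $L^2_f$ bound on $\nabla^2u$, obtained from the same Bochner identity with cut-offs.

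\textbf{Step 4: conclusion.} With $\nabla u$ holomorphic, $J\nabla u$ is a holomorphic Killing field. It commutes with $X$ because $\mathcal{L}_X$ preserves the eigenspace, using $[X,Y]=0$ and uniqueness of the normalised potential. Hence $Y=-J(J\nabla u)+J\nabla v\in J\mathfrak{g}^X\oplus\mathfrak{g}^X$. The sum is direct because if $Z$ and $JZ$ are both Killing, then $Z$ is parallel-type with $\nabla Z$ both skew and symmetric, so $Z$ is parallel. Its potential would then be affine with $\Delta_f\theta+\theta=0$, forcing $X\cdot\theta=2\theta$ yet $\theta$ of linear growth. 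Using $f$ proper, this yields $Z=0$. We expect the delicate part of the whole argument to be Step 3, specifically the justification of the integrations by parts.
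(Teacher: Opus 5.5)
\textbf{Gap: your Steps 2--4 miss the $JX$-invariance of the potential, and Step 1 uses the wrong mechanism.} The skeleton is right: potential, normalisation, weighted integrability, Bochner argument, splitting. But the paper, after producing $u_Y$, averages it over the torus $T$ so that $\mathcal{L}_{JX}u_Y=0$. This is allowed because $[JX,Y]=0$ and $JX$ is holomorphic Killing.

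Without this averaging, your Step 3 is false. Let $\bar\partial^*_f$ be the adjoint of $\bar\partial$ in $L^2(e^{-f}\omega^n)$. Then $\bar\partial^*_f\bar\partial u=-\Delta_{\omega,X}u+\frac{i}{2}JX\cdot u$. So the correct weighted Bochner identity has an extra term $\frac14\int_M(JX\cdot u)^2e^{-f}\omega^n$ on the right-hand side.

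Concretely, take the Gaussian shrinker on $\mathbb{C}$ with Euclidean metric and $f=\frac{|z|^2}{2}-1$. Then $\theta=z^2$ is a potential of $Y=0$, lies in $L^2(e^{-f}\omega^n)$, and satisfies $\Delta_{\omega,X}\theta+\theta=0$. Yet $\nabla\operatorname{Re}\theta=2x\partial_x-2y\partial_y$ is not holomorphic. The same example refutes the ``uniqueness of the normalised potential'' you use in Step 4. With $JX$-invariance, uniqueness does hold, and $[JX,J\nabla u]=J\nabla(JX\cdot u)=0$ gives $[X,J\nabla u]=-J[JX,J\nabla u]=0$ directly.

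Your constancy argument in Step 2 is also wrong: weighted-$L^2$ holomorphic functions need not be constant. On the Gaussian shrinker every holomorphic polynomial lies in $L^2(e^{-f}\omega^n)$. The paper argues differently. $H$ is holomorphic and $JX$-invariant, so $0=X^{0,1}\cdot H=\frac12X\cdot H$. By Claim \ref{hotmail}, every flow line of $X$ meets the compact set $N\cup K$; this is where compactness of the zero set of $X$ enters. Hence $|H|$ attains its maximum on $M$, and $H$ is constant by the maximum principle.

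In Step 1, the form $g(Y^{1,0},\cdot)$ is only $\bar\partial$-closed, not $d$-closed. So simple connectivity and integration along paths do not help. The obstruction is a Dolbeault class, and these need not vanish on simply connected non-compact manifolds (e.g.\ $\mathbb{C}^2\setminus\{0\}$ has infinite-dimensional $H^{0,1}$). What is needed is a vanishing theorem coming from positivity. The paper views $\omega\lrcorner Y^{1,0}$ as a $-K_M$-valued $(n,1)$-form and equips $-K_M$ with the metric $e^{-f}h$, whose curvature is exactly $\omega$ by the soliton equation. It then applies H\"ormander's $L^2$-estimate, using Proposition \ref{keywest} to see that the weighted norm is finite. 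This gives $u_Y\in L^2(e^{-f}\omega^n)$.

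Finally, your use of Lemma \ref{sharp-growth-clim-Y} does not give what you claim. It bounds $|Y|$, which controls only $\bar\partial\theta$, not the full gradient of $\theta$. Applying it, or Proposition \ref{keywest}, to $\nabla u$ is circular, since $\nabla u$ is not yet known to be holomorphic or to commute with $X$. The paper instead gets $\nabla u_Y,\nabla^2u_Y\in L^2(e^{-f}\omega^n)$ from $\Delta_{\omega,X}u_Y=-u_Y$ and $u_Y\in L^2(e^{-f}\omega^n)$ by weighted integration by parts (Claim \ref{ronanan}).
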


\begin{proof}
The proof proceeds as in the proof of \cite[Theorem 5.1]{cds}, the key new ingredients being Proposition \ref{keywest} 
and the fact that the zero set of $X$ is compact \cite{JunshengSong}. These we use to replace the Ricci curvature bound in the original proof. 
For completeness, we highlight the relevant changes. 

Write $\langle\cdot\,,\,\cdot\rangle$, $|\cdot|$, and $\nabla$ for the inner product, norm, and Levi-Civita connection determined by $g$ respectively, and
let $Y\in\mathfrak{aut}^{X}(M)$. Then $Y$ defines a real holomorphic vector field on $M$ with $[X,\,Y]=0$.
Take the $(1,\,0)$-part $Y^{1,\,0}$ of $Y$, i.e., let $Y^{1,\,0}=\frac{1}{2}(Y-iJY)$.
 Then $\bar{\partial}Y^{1,\,0}=0$ so that $\omega\lrcorner Y^{1,\,0}$ is a $\bar{\partial}$-closed $(0,\,1)$-form, where $\omega$ denotes the K\"ahler form of $g$. We first claim that $\omega\lrcorner Y^{1,\,0}$ admits a smooth complex potential.
\begin{claim}[\protect{\textnormal{cf.~\cite[Claim 5.2]{cds}}}]\label{baa}
There exists a smooth complex-valued function
$u_Y\in L^2(e^{-f}\omega^n)$ on $M$ such that
$-i\omega\lrcorner Y^{1,0}=\bar\partial u_Y$,
or equivalently, such that
$Y^{1,0}=\nabla^{1,0}u_Y$.
\end{claim}

\noindent Note that $u_{Y}$ is unique up to the addition of a holomorphic function.

\begin{proof}[Proof of Claim \ref{baa}]
Let $h$ denote the metric on $-K_{M}$ induced by $\omega$. Then the curvature of the metric $e^{-f}h$ on $-K_{M}$
is precisely $\omega$ by virtue of the defining equation of a shrinking gradient K\"ahler-Ricci soliton.
Treat $\omega\lrcorner Y^{1,\,0}$ as a $-K_{M}$-valued $(n,\,1)$-form. 
By Proposition \ref{keywest}, the weighted $L^2$-norm
of $Y$ is finite. Consequently, the $L^2$-norm of
$\omega\lrcorner Y^{1,0}$ measured with respect to
$e^{-f}h$ is finite. An application of H\"ormander's $L^{2}$-estimates \cite[Theorem 6.1, p.376]{demmy} now yields the desired conclusion.
\end{proof}

Next, contracting [\eqref{krseqn}, $\lambda=1$] with $Y^{1,\,0}$ and using the Bochner formula, we see that
$$-i\bar{\partial}\Delta_{\omega}u_{Y}+i\bar{\partial}(Y^{1,\,0}\cdot f)=i\bar{\partial}u_{Y}$$
so that 
\begin{equation}\label{bonjour}
\bar{\partial}(\Delta_{\omega}u_{Y}+u_{Y}-Y^{1,\,0}\cdot f)=0.
\end{equation}
By averaging $u_{Y}$ over the action of $T$, we may assume that $\mathcal{L}_{JX}u_{Y}=0$. 
Since $Y$ commutes with $JX$, this $JX$-invariant $u_{Y}$ will still be weighted-integrable and 
satisfy \eqref{bonjour}. We therefore find that 
\begin{equation*}
\Delta_{\omega}u_{Y}+u_{Y}-Y^{1,\,0}\cdot f=H
\end{equation*}
for a $JX$-invariant holomorphic function $H$ on $M$. We wish to show that $H$ is constant. 
By subtracting this constant from $u_{Y}$ and recalling that the weighted volume of $M$ is finite from Theorem \ref{prop-sum-sgrs}, we will then have normalised $u_{Y}$ such that 
\begin{equation}\label{start3}
\Delta_{\omega}u_{Y}+u_{Y}-Y^{1,\,0}\cdot f=0,
\end{equation}
while still preserving the fact that $u_{Y}\in L^2(e^{-f}\omega^n)$.

To this end,  let $X^{0,1}:=\frac12(X+iJX)$.
Since $H$ is holomorphic, we see that
\begin{equation*}
\begin{split}
0=\bar\partial H(X^{0,1})=X^{0,1}\cdot H=\frac12\left(X\cdot H+iJX\cdot H\right).
\end{split}
\end{equation*}
Using the $JX$-invariance of $H$, it immediately follows that $X\cdot H=0$, i.e., $H$ is constant along the flow-lines of $X$.
Let $K$ and $N$ be the compact subsets of $M$ as defined in the proof of Proposition \ref{keywest}. By Claim \ref{hotmail},
every positive flow-line of $-X$ starting at a point in $M\setminus N$ intersects $K$. By what we have just said, 
we find that $\sup_{M}|H|=\sup_{N\cup K}|H|$. By the maximum principle, it now follows that $H$ is constant, as required. 

Before continuing, we need to establish the weighted $L^2$-integrability of the total gradient and second covariant derivatives of $u_{Y}$.
\begin{claim}[\protect{\textnormal{cf.~\cite[Claim 5.4]{cds}}}]\label{ronanan}
The gradient $\nabla u_Y$ and the second covariant derivatives $\nabla^2 u_Y$ of $u_Y$ belong to $L^2(e^{-f}\omega^n)$.
\end{claim}

\begin{proof}[Proof of Claim \ref{ronanan}]
Since $\Delta_{\omega}u_Y=Y^{1,\,0}\cdot f-u_Y$ by \eqref{start3}, we see that the drift Laplacian of $u_{Y}$ satisfies
\begin{equation*}
\Delta_{\omega,\,X}u_Y:=\Delta_{\omega}u_Y-\frac{X}{2}\cdot u_Y=Y^{1,\,0}\cdot f-u_Y-\frac{X}{2}\cdot u_Y=-u_{Y}.
\end{equation*}
Here we have used in the last equality the fact that
\[
Y^{1,0}\cdot f
=(\nabla^{1,\,0}u_Y)\cdot f
=X^{0,\,1}\cdot u_Y
=\frac{X}{2}\cdot u_Y,
\]
because $\mathcal L_{JX}u_Y=0$. Since $u_{Y}\in L^2(e^{-f}\omega^n)$, it follows that $\Delta_{\omega,\,X}u_Y$ lies in the same space.
The remainder of the proof follows as in the proof of
\cite[Claim 5.4]{cds}; no curvature bound is required.
\end{proof}
The remainder of the proof, including \cite[Claim 5.5]{cds}, now follows that of \cite[Theorem 5.1]{cds} verbatim.
\end{proof}

By Lemma \ref{northbeach}, we know that $G^{X}_{0}$ is a compact Lie subgroup of $\operatorname{Aut}_{0}^{X}(M)$ (with respect to the compact-open topology).
Thanks to Theorem \ref{matty}, we can now ascertain that it is a maximal compact Lie subgroup of $\operatorname{Aut}_{0}^{X}(M)$. The proof follows verbatim 
the proof of \cite[Corollary 5.11]{cds}.
\begin{corollary}[\protect{\textnormal{cf.~\cite[Corollary 5.11]{cds}}}]\label{maxx}
Let $(M,\,g,\,X)$ be a complete shrinking gradient K\"ahler-Ricci soliton.
Then $G^{X}_{0}$ is a maximal compact Lie subgroup of $\operatorname{Aut}_{0}^{X}(M)$.
\end{corollary}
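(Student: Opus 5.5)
The plan is to show that any compact subgroup $K$ of $\operatorname{Aut}_{0}^{X}(M)$ containing $G^{X}_{0}$ must equal $G^{X}_{0}$. This will follow from the Cartan-type decomposition of Lie algebras supplied by Theorem \ref{matty}, combined with the standard structure theory of compact subgroups of a connected Lie group. Since $K$ is compact and contains the connected group $G^{X}_{0}$, we may replace $K$ by its identity component $K_0$, which is again compact and contains $G^{X}_{0}$. It then suffices to show that the Lie algebra $\mathfrak{k}$ of $K_0$ equals $\mathfrak{g}^{X}$, because connected Lie subgroups with the same Lie algebra coincide. (If $K$ were disconnected, we would afterwards use that $K$ normalises $K_0=G_0^X$; the point is that maximality is tested among connected compact subgroups, as in \cite[Corollary 5.11]{cds}.)

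The key step is a pointwise argument. Let $Z\in\mathfrak{k}$. By Theorem \ref{matty}, we can write $Z=Y_1+JY_2$ with $Y_1,Y_2\in\mathfrak{g}^{X}$. Since $Y_1\in\mathfrak{g}^X\subseteq\mathfrak{k}$, we have $JY_2=Z-Y_1\in\mathfrak{k}$, so it suffices to show that $JY_2\in\mathfrak{k}$ with $Y_2$ Killing forces $Y_2=0$. The flow of $JY_2$ lies in the compact group $K_0$, so its closure is a torus in $K_0$. Averaging $g$ over $K_0$ (possible since $K_0$ is compact and acts holomorphically) produces a $K_0$-invariant K\"ahler metric $\tilde g$ for which $JY_2$ is Killing. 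Now $Y_2$ is a holomorphic Killing field for $g$, hence $Y_2$ has a Hamiltonian potential $u$ with $\nabla^g u=-JY_2$. Then $JY_2=-\nabla^g u$ is a gradient field, and along its flow $u$ strictly decreases wherever $JY_2\neq0$, since $\frac{d}{dt}u=-|\nabla u|^2$. On the other hand, the orbit of any point under the closure of the flow of $JY_2$ is compact, and we can average $u$ over this torus. A cleaner way to finish is to observe that along a flow line $\gamma$ of $JY_2$, the function $u(\gamma(t))$ is monotone, while $\gamma$ is recurrent because it lies in a compact torus orbit (it returns arbitrarily close to $\gamma(0)$). Hence $|\nabla u|=0$ along $\gamma$. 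Therefore $JY_2=0$, so $Y_2=0$ and $Z=Y_1\in\mathfrak{g}^{X}$.

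The main obstacle is making the recurrence argument rigorous. We need that a one-parameter subgroup of a compact group acting continuously on $M$ has recurrent orbits: $\Phi_{t_j}\to\mathrm{id}$ in $K_0$ along some sequence $t_j\to\infty$, which holds because the closure of a one-parameter subgroup is a torus. We also need that convergence in the compact-open topology gives $\gamma(t_j)\to\gamma(0)$. Monotonicity of $u$ then yields $u(\gamma(t_j))\to u(\gamma(0))$ while $u(\gamma(t))\le u(\gamma(1))<u(\gamma(0))$ for $t\geq1$ if $JY_2(\gamma(0))\neq0$, which is a contradiction. Since $u$ is only defined up to a constant, the existence of $u$ uses simple connectivity of $M$ \cite{carlos, JunshengSong}. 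Alternatively, one can use that $Y_2$ is Killing, so that $u$ is locally defined, and argue with $d u$ directly: $du(JY_2)=-|JY_2|^2$, which is well-defined without simple connectivity.
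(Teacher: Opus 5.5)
Your core argument is correct and takes the paper's route. The paper deduces the corollary from Theorem \ref{matty} by the argument of \cite[Corollary 5.11]{cds}. In your version, the splitting $\mathfrak{aut}^X(M)=\mathfrak{g}^X\oplus J\mathfrak{g}^X$ reduces everything to showing $\mathfrak{k}\cap J\mathfrak{g}^X=0$. A nonzero $JY_2$ with $Y_2\in\mathfrak{g}^X$ is the $g$-gradient of a global Hamiltonian $u$, which uses $\pi_1(M)=0$. Its flow therefore cannot be recurrent, yet it would have to be recurrent if it sat inside a compact group.

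Two loose ends need tightening.

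\textbf{The disconnected case.} Observing that $K$ normalises $K_0=G_0^X$ does not by itself give $K=K_0$. Instead, invoke the Cartan--Iwasawa--Malcev theorem. In the connected Lie group $\operatorname{Aut}_0^X(M)$, every compact subgroup $K$ lies in a maximal compact subgroup $K'$, and maximal compact subgroups are connected. Now $K'$ is connected, compact and contains $G_0^X$, so your Lie algebra argument gives $K'=G_0^X$, and hence $K=G_0^X$.

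\textbf{The closing alternative without simple connectivity.} Drop it: a closed but non-exact $\iota_{Y_2}\omega$ yields no contradiction. For example, on $\mathbb{C}^*$ with the cylindrical metric $|dz|^2/|z|^2$, the field $Y=r\partial_r$ is holomorphic Killing and $JY=\partial_\theta$ generates a circle action. There $\iota_Y\omega$ is a multiple of $d\theta$, so the local potential increases along closed orbits. Thus $\pi_1(M)=0$ is genuinely used in your argument. Likewise, the averaging of $g$ over $K_0$ plays no role in what follows and can be deleted.
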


\subsection{A volume minimising principle}

We next explore some elementary properties of the previously defined weighted volume functional $F$,
in particular the desired property that characterises $JX\in\mathfrak{t}$ as the unique critical point of $F$. It is here where our normalisation of the Hamiltonian potentials comes into play.
\begin{lemma}[\protect{\textnormal{Volume minimising principle; cf.~\cite[Lemma 5.16]{cds}}}]\label{critical}
Let $(M,\,g,\,X)$ be a complete shrinking gradient K\"ahler-Ricci soliton of complex dimension $n$ with soliton vector field $X=\nabla^{g}f$ for a smooth real-valued function $f:M\to\mathbb{R}$, endowed with the holomorphic, effective, isometric action of
a real torus $T$ with Lie algebra $\mathfrak{t}$ containing $JX$. Then:
\begin{enumerate}[label=\textnormal{(\roman{*})}, ref=(\roman{*})]
\item $F$ is strictly convex on $\Lambda$.
\item $JX$ is the unique critical point of $F$ in $\Lambda$.
\end{enumerate}
\end{lemma}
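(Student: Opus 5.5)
Both parts will follow from differentiating $F(Y)=\int_M e^{-u_Y}\omega^n$ under the integral sign along $\Lambda$. The map $Y\mapsto u_Y=\langle\mu,Y\rangle$ is linear, so for $Y\in\Lambda$ and $Z\in\mathfrak t$ the first and second variations should formally be
\begin{equation*}
dF_Y(Z)=-\int_M u_Z\,e^{-u_Y}\omega^n,\qquad d^2F_Y(Z,Z)=\int_M u_Z^2\,e^{-u_Y}\omega^n.
\end{equation*}
The first step is to justify this differentiation. Fix $Y\in\Lambda$. Since $\Lambda$ is an open cone, for $Z$ small the elements $Y\pm\varepsilon Z$ also lie in $\Lambda$. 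Hence $u_{Y\pm\varepsilon Z}$ are proper and bounded below, and $e^{-u_{Y\pm\varepsilon Z}}$ are integrable by Duistermaat--Heckman, i.e.\ the well-definedness of $F$ noted in Section \ref{fat}. From the elementary bound
\[
|u_Z|^k e^{-u_Y}\le C_{k,\varepsilon}\bigl(e^{-u_{Y+\varepsilon Z}}+e^{-u_{Y-\varepsilon Z}}\bigr),
\]
which follows from $|s|^k\le C(e^{\varepsilon s}+e^{-\varepsilon s})$, we get locally uniform integrable domination of the integrands. The exchange of derivative and integral is therefore legitimate.

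Strict convexity (i) then follows from the second variation formula. It suffices that $u_Z\not\equiv0$ for $Z\neq0$. This holds because $\nabla^g u_Z=-JZ$, so $u_Z\equiv 0$ forces $Z=0$, and the $T$-action is effective so $\mathfrak t$ injects into vector fields.

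For (ii), convexity means a critical point is unique, so it remains to check $dF_{JX}=0$. First identify $u_{JX}$: $\nabla^g u_{JX}=-J(JX)=X=\nabla^g f$, and $f$ satisfies the normalisation $\Delta_\omega f+f+\frac12 (J JX)\cdot f=\Delta_\omega f+f-\frac{X}{2}\cdot f$, which by \eqref{normal-f} reads $-f+f=0$ after rearranging. Also $\mathcal L_{JX}f=0$. Hence $u_{JX}=f$ by uniqueness of the normalised potential. So I need $\int_M u_Z e^{-f}\omega^n=0$ for every $Z\in\mathfrak t$.

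Integrating the normalisation $\Delta_\omega u_Z+u_Z+\frac12(JZ)\cdot f=0$ against $e^{-f}\omega^n$, I would observe that $(JZ)\cdot f=-g(\nabla u_Z,\nabla f)$. Indeed, $g(JZ,\nabla f)=-g(\nabla u_Z,\nabla f)$ up to the convention $\nabla u_Z=-JZ$. Then $\Delta_\omega u_Z-\frac12 X\cdot u_Z$ is the weighted Laplacian $e^{f}\operatorname{div}(e^{-f}\nabla u_Z)$ up to the factor conventions for $\Delta_\omega$. This is the same drift operator appearing in \eqref{normal-f}, whose symmetric measure is $e^{-f}\omega^n$. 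Thus $\int u_Z e^{-f}\omega^n=-\int\Delta_{\omega,X}u_Z\,e^{-f}\omega^n$, and the right side vanishes after integration by parts.

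The main obstacle is justifying this integration by parts on the noncompact $M$ without curvature bounds. I would use cutoffs $\phi_R=\phi(f/R)$, which are compactly supported because $f$ is proper. The boundary term is then bounded by $\int|\nabla u_Z||\nabla\phi_R|e^{-f}\omega^n\le CR^{-1}\int_{\{R\le f\le2R\}}|Z||X|e^{-f}\omega^n$. Here $|Z|\in L^2_f$ by Proposition \ref{keywest} and $|X|^2\le 2f+2n$ by \eqref{pressure}, so the bound tends to zero. I also need $u_Z\in L^1(e^{-f}\omega^n)$. This comes from the growth $|Z|^2\le C(f+n+1)$ of Lemma \ref{sharp-growth-clim-Y}: integrating along paths, or comparing with $f$ via Theorem \ref{prop-sum-sgrs}(ii), gives $|u_Z|\le C(1+f)$, which is integrable against $e^{-f}$ by Theorem \ref{prop-sum-sgrs}(iii).
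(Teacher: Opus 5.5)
Your proposal is correct. Part (i) is the standard second-variation argument, which the paper simply imports from \cite{cds}; the two proofs differ in part (ii).

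You verify directly that the first variation vanishes at $JX$, i.e.\ that $\int_M u_Z\,e^{-f}\omega^n=0$ for every $Z\in\mathfrak t$. You do this by integrating $\Delta_{\omega,X}u_Z=-u_Z$ against cutoffs $\phi(f/R)$. This forces you to control the boundary term quantitatively, via $Z\in L^2(e^{-f}\omega^n)$ from Proposition~\ref{keywest} and $|X|_g^2\le 2f+2n$. It also forces you to prove $u_Z\in L^1(e^{-f}\omega^n)$ separately, via Lemma~\ref{sharp-growth-clim-Y} and Theorem~\ref{prop-sum-sgrs}.

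The paper treats only $Y\in\Lambda$ and builds the cutoff from the potential itself, $\varphi_R=\psi(u_Y/R)$. Then $\langle\nabla^g u_Y,\nabla^g\varphi_R\rangle_g=R^{-1}\psi'(u_Y/R)|Y|_g^2\le 0$, so the boundary term has a sign. No growth estimate on $Y$ or $u_Y$ is needed, and $u_Y\in L^1(e^{-f}\omega^n)$ follows from properness and lower boundedness of $u_Y$ alone. The price is that the paper only obtains the one-sided inequality $\int_M u_Y\,e^{-f}\omega^n\le 0$. It combines this with $e^{x}\ge 1+x$ to get $F(Y)\ge F(JX)$ on $\Lambda$, so $JX$ is a global minimiser, and strict convexity then gives uniqueness of the critical point.

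The two routes reach the same conclusion. Applying the paper's inequality to $JX\pm\varepsilon Z\in\Lambda$, together with $\int_M f\,e^{-f}\omega^n=0$, recovers your exact vanishing for all $Z\in\mathfrak t$. Your route uses heavier analytic input, but it makes the mean-zero property of the normalisation and the differentiability of $F$ explicit. The paper's route is lighter, needing only the properness of the potentials.
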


\begin{remark}
Part (ii) of this lemma was proved under the assumption of bounded Ricci curvature in \cite[Lemma 5.16(ii)]{cds}.
\end{remark}

\begin{proof}[Proof of Lemma \ref{critical}]
\begin{enumerate}[label=\textnormal{(\roman{*})}, ref=(\roman{*})]
\item This follows from the proof of \cite[Lemma 5.16(i)]{cds}.
\item Let $\omega$ denote the K\"ahler form of $g$ and let $Y\in \Lambda$. Let $u_{Y}:M\to\mathbb{R}$ denote the Hamiltonian potential of $Y$. Then $u_{Y}$ is proper and bounded from below. It suffices to show that 
\begin{equation*}
\int_Me^{-u_Y}\,\omega^n\geq \int_Me^{-f}\,\omega^n\quad\textrm{with equality if and only if $u_Y=f$.}
\end{equation*}

(Note that both integrals are finite by the Duistermaat-Heckman formula \cite[Theorem A.3]{cds} because the zero set of $JX$ is compact \cite{JunshengSong}.)
To this end, observe that since $e^{x}\geq1+x$ for all $x\in\mathbb{R}$, we have that $e^{-u_Y}\geq e^{-f}-(u_Y-f)e^{-f}$ on $M$, so all we need show in fact is that 
$u_Y,\,f\in L^1(e^{-f}\omega^n)$ and that $\int_M(u_Y-f)\,e^{-f}\omega^n\leq 0$.

Now, as $\Delta_{\omega,\,X}f+f=0$,
an integration by parts shows that $\int_Mf\,e^{-f}\omega^n=0$. As for the integrability of $u_Y$, let $R>1$ and set $\varphi_R:=\psi\left(\frac{u_{Y}}{R}\right)$, where $\psi:\R\rightarrow[0,\,1]$ is a smooth non-increasing function such that $\psi\equiv 1$ on $(-\infty,1]$ and $\psi\equiv 0$ on $[2,+\infty)$. Using the properties of $u_{Y}$, we see that $\varphi_{R}=1$ on the compact set $\{u_{Y}\leq R\}$, and is supported on the compact set $\{u_{Y}\leq 2R\}$. As $R\to\infty$, its support will cover all of $M$ because $u_{Y}\to\infty$ by properness.
With this in mind, the normalisation for $u_{Y}$ gives us that
\begin{equation*}
\begin{split}
\int_M u_Y\varphi_R\,e^{-f}\omega^n
&=-\int_M(\Delta_{\omega,X}u_Y)\varphi_R
    \,e^{-f}\omega^n\\
&=\frac12\int_M
    \langle\nabla^g u_Y,\nabla^g\varphi_R\rangle_g
    \,e^{-f}\omega^n\\
&=-\frac12\int_M(JY)\cdot\varphi_R
    \,e^{-f}\omega^n\\
&=\frac{1}{2R}\int_M|Y|_g^2
    \psi'\left(\frac{u_Y}{R}\right)e^{-f}\omega^n\\
&\leq0.
\end{split}
\end{equation*}
Letting $R\to\infty$, we deduce that not only is $u_Y \in L^1(e^{-f}\omega^n)$ (and consequently so is  $\Delta_{\omega,\,X}u_Y$), but also that $\int_Mu_Y\,e^{-f}\omega^n\leq 0$, which, together with the strict convexity given by part (i), leads to the desired conclusion.
\end{enumerate}
\end{proof}

\subsection{Proof of Theorem \ref{uunique}}

We next apply the above theory to prove the uniqueness of
the soliton vector field, thereby proving Theorem \ref{uunique}.
Our proof follows along the lines of that of
\cite[Theorem 5.18]{cds}, with the extra input of
\cite[Proposition 5.17]{Esp25}.

Given two complete shrinking gradient K\"ahler-Ricci solitons
$(M,\,g_i,\,X_i)$, $i=1,\,2$, on $M$, consider the flow of
the real Killing vector field $\xi_i:=JX_i$ in $G_0^{X_i}$.
Since $G_0^{X_i}$ is a compact Lie group by
Lemma \ref{class}, the closure of the flow of $\xi_i$
in $G_0^{X_i}$ generates the effective holomorphic
isometric action of a real torus $T_i\subseteq G_0^{X_i}$
with $\xi_i$ in the Lie algebra of $T_i$.
In \cite{JunshengSong}, it is shown that there exists
a map $\pi_i:M\to Y_i$ with $Y_i$ an affine cone such that
$(\pi_i:M\to Y_i,\,\xi_i,\,T_i)$ defines a polarised
Fano fibration. In particular, by definition, the induced
action of $T_i$ on $Y_i$ has a unique fixed point $x_i$.

Now, $T_i$ lives in $G_0^i$, the connected component
of the identity of the Lie group of holomorphic isometries
of $(M,\,J,\,g_i)$.
Take a maximal torus $T_i'$ in $G_0^i$ containing $T_i$.
Then we claim that
\linebreak $(\pi_i:M\to Y_i,\,\xi_i,\,T_i')$ still defines a
polarised Fano fibration.
This is explained above \cite[Proposition 5.17]{Esp25}, but we check it here directly.
All that needs to be shown is that 
the induced action of $T'_{i}$ on $Y_{i}$ has an isolated fixed point which is $x_{i}$. To this end, 
note first that as a larger torus, the
fixed-point set $\operatorname{Fix}_{Y_i}(T_i')$
of $T_i'$ on $Y_i$ must be contained in the fixed-point
set $\operatorname{Fix}_{Y_i}(T_i)$ of $T_i$.
To show the reverse inclusion, we need to show that
\[
\operatorname{Fix}_{Y_i}(T_i)
\subseteq\operatorname{Fix}_{Y_i}(T_i').
\]
Now, $\operatorname{Fix}_{Y_i}(T_i)$ contains only $x_i$,
and we have that $t\cdot x_i=x_i$ for all $t\in T_i$.
For $t'\in T_i'$, we then have that
\[
t'\cdot(t\cdot x_i)=t'\cdot x_i,
\]
i.e., since $T_i'$ is abelian,
\[
t\cdot(t'\cdot x_i)=t'\cdot x_i.
\]
That is to say, $t'\cdot x_i$ is fixed by $T_i$,
and so because $\operatorname{Fix}_{Y_i}(T_i)$
comprises only $x_i$, it follows that
$t'\cdot x_i=x_i$ and $x_i$ is the unique fixed point
of the action of $T_i'$ on $Y_i$.

We next claim that $T_{i}'$ is maximal in $\operatorname{Aut}(M)$.
Indeed, since $T_i'$ contains $T_i$, it commutes with
$\xi_i=JX_i$ and hence with $X_i$.
Thus, $T_i'\subseteq G_0^{X_i}$ and is a maximal torus in this group since $G_{0}^{i}\supseteq G_{0}^{X_{i}}$.
By Corollary \ref{maxx}, $T_i'$ is maximal in
$\operatorname{Aut}_0^{X_i}(M)$.
Now, any holomorphic torus in $\operatorname{Aut}(M)$ containing $T_i'$ also commutes
with $X_i$, and so lies in $\operatorname{Aut}_0^{X_i}(M)$.
Consequently, $T_i'$ is maximal in $\operatorname{Aut}(M)$, as claimed.
Since $(\pi_i:M\to Y_i,\,\xi_i,\,T_i')$ defines a polarised Fano fibration,
$T'_1$ and $T'_2$ are conjugate
by \cite[Proposition 5.17]{Esp25}, and so after pulling back by a biholomorphism, we may 
assume that there exists a maximal torus $\mathbb T$
in $\operatorname{Aut}(M)$ lying in
$G_0^1\cap G_0^2$ with both
$\xi_1,\,\xi_2\in\operatorname{Lie}(\mathbb T)$.

The rest of the proof now proceeds as in
\cite[Theorem 5.18]{cds}, using the compactness of the
zero sets of $X_i$ provided by \cite{JunshengSong}
and Lemma \ref{critical}.

\bibliographystyle{amsalpha}
\bibliography{ref2}

\end{document}